\newtheorem{lemma}{Lemma}
\newtheorem{theorem}{Theorem}
\newtheorem{pro}{Proposition}
\newtheorem{cor}{Corollary}
\newtheorem{question}{Question}
\title{Weak Dynamic Coloring of Planar Graphs}
\date{\vspace{-5ex}}
\begin{document}
\maketitle

\author{ Caroline Accurso$^{1,5}$, \and
	Vitaliy Chernyshov$^{2,5}$, \and 
	Leaha Hand$^{3,5}$, \and 
	Sogol Jahanbekam$^{2,4,5}$, and \and
	Paul Wenger$^{2}$}

\footnotetext[1]{Department of Mathematics, DeSales University, Center Valley, PA ; {\tt ca3070@desales.edu}.}
\footnotetext[2]{School of Mathematical Sciences, Rochester Institute of Technology, Rochester, NY; {\tt vac4329@mail.rit.edu, sxjsma@rit.edu, pswsma@rit.edu.}}
\footnotetext[3]{Department of Mathematics, Boise State University, Boise ID; {\tt leahahand@u.boisestate.edu.}}
\footnotetext[4]{Research supported in part by NSF grant CMMI-1727743.}  
\footnotetext[5]{Research supported in part by NSF grant REU-1659075.}

\begin{abstract}
The {\it $k$-weak-dynamic number} of a graph $G$ is the smallest number of colors we need to color the vertices of $G$ in such a way that each vertex $v$ of degree $d(v)$ sees at least \rm{min}$\{k,d(v)\}$ colors on its neighborhood. We use reducible configurations and  list coloring of graphs to prove that all planar graphs have 3-weak-dynamic number at most 6. 
\end{abstract}


\noindent
{\it Keywords}: coloring of graphs and hypergraphs, planar graphs
\\ {\it MSC code}: 05C15, 05C10.

\section{Introduction}



 A \textit{proper coloring} of $G$ is a vertex coloring of $G$ in which adjacent vertices receive different colors. The \textit{chromatic number} of $G$, written as $\chi(G)$, is the smallest number of colors needed to find a proper coloring of $G$.   For notation and definitions not defined here we refer the reader to \cite{w}. 
 

 A $k$-\textit{dynamic coloring} of a graph $G$ is a proper coloring of $G$   in such a way that each vertex sees at least $\rm{min}\{d(v),k\}$ colors in its neighborhood. The  $k$-\textit{dynamic chromatic number} of a graph $G$, written as $\chi_k(G)$, is the smallest number of colors needed to find an $k$-dynamic coloring of $G$. Dynamic coloring of graphs was first introduced by Montgomery in \cite{m}. 
 
 Montgomery \cite{m} conjectured that $\chi_2(G)\leq \chi(G)+2$, for all regular graphs $G$. Montgomery's conjecture was shown to be true for some families of graphs including bipartite regular graphs  \cite{agj1}, claw-free regular  graphs \cite{m}, and regular graphs with diameter at most 2 and chromatic number at least 4 \cite{alishahi}.   For all integers $k$, Alishahi \cite{alishahi} provided a regular graph $G$ with $\chi_2(G)\geq \chi(G)+1$ and $\chi(G)=k$. In \cite{alishahi2}, Alishahi proved that $\chi_2(G)\leq 2\chi(G)$ for all regular graphs $G$. Later Bowler et al. \cite{counterexample} disproved the Montgomery's conjecture by showing that Alishahi's bound is best possible. For all  integers $n$ with $n\geq 2$, they found  a regular graph $G$ with $\chi(G)=n$ but $\chi_2(G)=2\chi(G)$.  Other upper bounds have also been determined for the $k$-dynamic chromatic number of regular graphs and general graphs. See for example \cite{alishahi2,da,jkow,t}.

  
In   this paper we look at a weaker form of dynamic coloring in which we do not look at the constraint that the coloring must be proper. We refer to this type of coloring as a {\it weak-dynamic coloring}. Therefore a $k$-\textit{weak-dynamic coloring} of a graph $G$ is a coloring of the vertices of $G$ in such a way that each vertex $v$  sees at least $\rm{min}\{d(v),k\}$ colors in its neighborhood. We define $k$\textit{-weak-dynamic
 number} of $G$, written as $wd_k(G)$,  to be the smallest number of colors needed to obtain a $k$-weak-dynamic coloring of $G$. 
 
By an  observation in \cite{jkow} we have $\chi_k(G)\leq \chi(G)wd_k(G)$, because we can associate to each vertex of $G$ an ordered pair of colors in which the first color comes from a proper coloring of $G$ and the second color comes from a $k$-weak-dynamic coloring of $G$, to obtain a $k$-dynamic coloring of $G$. 

A \textit{proper coloring} of a hypergraph is a coloring of its vertices in such a way that each hyperedge sees at least two different colors. For a graph $G$, let  $H$ be the hypergraph with vertex set $V(G)$  whose edges are the vertex neighborhoods in $G$. When $\delta(G)\geq 2$,  any 2-weak-dynamic coloring of $G$ corresponds to a proper coloring of $H$ and vice versa. 
  
In this paper we study weak-dynamic coloring of planar graphs. Kim et al. \cite{kim} proved that $\chi_2(G)\leq 4$ for all planar graphs $G$ with no $C_5$-component. Note also that we can find a 2-weak-dynamic coloring of $C_5$ using only 3 colors. Therefore the inequality $wd_2(G)\leq \chi_2(G)$ implies that all planar graphs have 2-weak-dynamic coloring at most 4. We also know that the upper bound $4$ for the 2-weak-dynamic coloring of planar graphs is best possible, as $wd_2(G)=4$ when $G$ is a subdivision of $K_4$. Our aim in this paper is to obtain an upper bound for  $wd_3(G)$ when $G$ is a planar graph. We prove the following theorem. 

\begin{theorem}\label{main}
Any planar graph $G$ satisfies $wd_3(G)\leq 6$.
\end{theorem}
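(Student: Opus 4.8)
The plan is to prove Theorem~\ref{main} via the discharging method, which is the standard framework for obtaining coloring bounds on planar graphs and is signaled by the abstract's mention of ``reducible configurations.'' First I would argue by contradiction: suppose $G$ is a minimal counterexample, i.e.\ a planar graph with $wd_3(G) \geq 7$ having the fewest vertices (and, among those, the fewest edges). Minimality means every proper subgraph of $G$ admits a $3$-weak-dynamic coloring with $6$ colors, so I can always color a smaller graph and then try to extend the coloring to the deleted vertices. The goal is to identify a list of unavoidable configurations and show each is reducible.

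The first block of steps establishes the reducible configurations. Because the weak-dynamic constraint at a vertex $v$ demands that $v$ sees $\min\{3, d(v)\}$ colors among its neighbors, the ``hard'' vertices are those of degree $1$, $2$, and $3$, since a high-degree vertex has many neighbors whose colors offer flexibility. I would first show that $G$ has no vertex of degree at most $1$ (trivial to re-extend), then rule out certain low-degree vertices and adjacency patterns among them. The key idea, hinted at by the phrase ``list coloring,'' is that when I delete a small configuration and recolor the rest, each deleted vertex acquires a list of allowed colors (the colors not forbidden by its already-colored neighbors and by the constraints its colored neighbors impose back on it), and I extend the coloring by solving a list-coloring problem on the configuration. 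With $6$ colors available and the comparatively weak ``see $3$ colors'' requirement, these lists should be long enough that a greedy or small-case argument completes the extension. The precise configurations to forbid (e.g.\ adjacent low-degree vertices, small-degree vertices sharing neighbors, specific small faces) are exactly what I would tune so that both reducibility and unavoidability go through.

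The second block is the discharging argument establishing unavoidability. Assign each vertex $v$ the initial charge $d(v) - 6$ and each face $f$ the charge $2\,d(f) - 6$; by Euler's formula for a connected plane graph the total charge is $\sum_v (d(v)-6) + \sum_f (2\,d(f)-6) = -12 < 0$. I would then design discharging rules that move charge from high-degree vertices and large faces toward low-degree vertices, with the rules chosen so that if none of the reducible configurations appears, every vertex and every face ends with nonnegative charge. That contradicts the negative total, so a minimal counterexample cannot exist, proving $wd_3(G) \leq 6$.

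The hard part will be simultaneously balancing reducibility and unavoidability: the list-coloring extension arguments limit how large or how densely clustered a forbidden configuration can be, while the discharging needs the forbidden set to be rich enough to absorb all the negative charge. In particular, verifying that each configuration is genuinely reducible under the weak-dynamic constraint is delicate, because deleting a vertex can \emph{destroy} a neighbor's already-satisfied ``sees $3$ colors'' condition, so I must track not only the colors forbidden at each deleted vertex but also the back-constraints that re-inserting it imposes on its neighbors. Getting the charge $d(v)-6$ to come out nonnegative for, say, degree-$3$ vertices will force most of the effort into the discharging rules and the attendant structural lemmas about the neighborhoods of low-degree vertices.
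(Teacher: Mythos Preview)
Your plan diverges substantially from the paper's actual proof. You propose the standard reducible-configurations-plus-discharging template, but the paper does \emph{not} use discharging at all. Both approaches begin with an edge-minimal counterexample $G$ and a battery of reducible configurations (Lemmas~\ref{lemma:AdjacentDegree2s}--\ref{lemma:3-RegularkCycle-remaining-cases}), but the ``unavoidability'' half is entirely different. The paper builds an auxiliary graph $G'$ on $V(G)$ in which, for each $v$, a carefully chosen set $N^*(v)\subseteq N(v)$ of size $\min\{d(v),3\}$ is made into a clique; a proper coloring of $G'$ is then automatically a $3$-weak-dynamic coloring of $G$. The $4^+$-vertices $A_4$ (independent in $G$ by Lemma~\ref{4-4}) are colored first with four colors by contracting the $3^-$-vertices away to get a planar minor $H$ and invoking the Four Color Theorem. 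This leaves each remaining vertex a list $L(v)=\{1,\dots,6\}\setminus c^*(N_{G'}(v)\cap A_4)$ of size at least its degree in $G''=G'-A_4$, and the extension is completed via degree-choosability (Theorem~\ref{list}, Corollary~\ref{cor-list}); the reducible configurations are used precisely to rule out the bad blocks (odd cycles and cliques) that would obstruct this list-coloring step.

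What each approach buys: your discharging outline is self-contained and would avoid the Four Color Theorem, but you have not specified any rules, and with initial charge $d(v)-6$ every $3$-vertex begins at $-3$, so the rules would have to be quite aggressive---it is not clear this can be balanced against the fairly delicate reducible set the paper actually needs (e.g.\ Lemma~\ref{lemma:3-RegularkCycle-remaining-cases} forbids \emph{any} cycle of $3$-vertices). The paper's route trades that combinatorial balancing act for a structural one: the auxiliary graph plus the Four Color Theorem handle the global part cleanly, and the remaining local obstructions are exactly the Gallai-tree blocks of $G''$, which the reducible configurations are engineered to exclude. As written, your proposal is a reasonable heuristic sketch but not yet a proof, and it is a genuinely different strategy from the one the paper executes.
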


In order to prove Theorem \ref{main}, we first study an edge-minimal counterexample $G$ to the statement of the theorem. In Section \ref{preliminary} we provide some tools we need during our proofs.  In Section \ref{reducible} we determine some configurations that do not exist in $G$; we call these {\it reducible configurations}.  In Section \ref{proof} we use the reducible configurations we obtain in Section \ref{reducible}  and the the tools we introduce  in Section~\ref{preliminary} to obtain a 3-weak-dynamic coloring of $G$ using 6 colors, which gives us a contradiction showing that no counterexample exists.


\section{Preliminary Tools}\label{preliminary}

A $d$-vertex in $G$ is a vertex of degree $d$ in $G$. A $d^+$-vertex in $G$ is a vertex of degree at least $d$ in $G$ and a $d^-$-vertex in $G$ is a vertex of degree at most $d$ in $G$.  A $d$-neighbor of a vertex $v$ in $G$ is a neighbor of $v$ having degree $d$.  Similarly, $d^+$-neighbors of $v$ have degree at least $d$, and $d^-$-neighbors of $v$ have degree at most $d$. For a vertex $v$, $N_G(v)$ (or simply $N(v)$) is the set of neighbors of $v$ in $G$. We define $N^2(v)$ to be the set of vertices in $G$ having a common neighbor with $v$. Let $c$ be a vertex coloring of $G$ and $A\subseteq V(G)$. We define $c(A)$ to be the set of colors on vertices in $A$.

During the proof of Theorem \ref{main}, we correspond an edge-minimal counterexample graph $G$ to  an auxiliary graph $H$ having the same vertex set as $G$ but with different set of edges. We build $H$ in such a way that any proper coloring of $H$ corresponds to a 3-weak-dynamic coloring of $G$. Hence for the rest of the proof, our aim would be to find a proper coloring of $H$ using 6 colors. To fulfill the aim we use the following  results on proper coloring of graphs and on planar graphs.

\begin{theorem}[Four-Color Theorem, Appel and Haken \cite{4-color-theorem}]\label{4-color-theorem}
Any planar graph has chromatic number at most 4. 
\end{theorem}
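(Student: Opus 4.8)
The plan is to argue by contradiction using the classical framework of a minimal counterexample combined with the discharging method. Suppose the theorem fails, and let $G$ be a counterexample with the fewest vertices; thus $\chi(G) \geq 5$ while every planar graph with fewer vertices is $4$-colorable. First I would reduce to a clean structural setting: since adding edges to a plane graph cannot decrease its chromatic number, I may assume $G$ is a \emph{maximal} planar graph, that is, a triangulation in which every face (including the outer face) is bounded by a triangle. Euler's formula $n - e + f = 2$, together with the triangulation identity $3f = 2e$, then yields $\sum_{v}(6 - d(v)) = 12 > 0$, so $G$ must contain a vertex of degree at most $5$. A short Kempe-chain analysis eliminates small degrees outright: a vertex of degree at most $3$ can be deleted, $4$-colored by minimality, and reinserted, while a vertex of degree $4$ is handled by the standard Kempe-chain interchange. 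Hence the minimal counterexample $G$ has minimum degree exactly $5$, and the whole difficulty concentrates at the degree-$5$ vertices, where Kempe's historical argument provably breaks down.

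The core of the argument is to exhibit an \emph{unavoidable set of reducible configurations}. Here a \emph{configuration} is a small subgraph together with the prescribed degrees of its vertices in the ambient triangulation, and the two properties I must establish are: (i) \emph{unavoidability}, that every triangulation with minimum degree $5$ contains at least one configuration from the set; and (ii) \emph{reducibility}, that no configuration in the set can occur in a minimal counterexample. Property (i) I would prove by \emph{discharging}: assign to each vertex $v$ the initial charge $6 - d(v)$, so the total charge is the positive constant $12$, then redistribute charge according to a fixed list of local rules, typically moving charge from high-degree vertices toward nearby degree-$5$ vertices. If the chosen configuration set were avoidable, I would show that every vertex finishes with non-positive charge, contradicting the positive total; hence some configuration must appear.

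For property (ii), reducibility is verified via Kempe chains on the \emph{ring} surrounding a configuration. Given a configuration $C$ with boundary ring $R$, one deletes the interior of $C$, $4$-colors the smaller graph by minimality, and asks whether \emph{every} coloring of $R$ that can arise is either directly extendable across $C$ or can be transformed by Kempe interchanges into one that is. A configuration is \emph{$D$-reducible} when the colorings of $R$ that extend, together with those reachable by Kempe chains, exhaust all $4$-colorings of $R$, and \emph{$C$-reducible} when a slightly larger local substitution is first needed. Establishing reducibility amounts to a finite but astronomically large case analysis over all $4$-colorings of each ring, which is precisely the step that resists any hand computation.

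The main obstacle, and the reason this statement resisted proof for over a century, is the sheer combinatorial scale of satisfying (i) and (ii) simultaneously: one must engineer a discharging procedure whose rules force unavoidability while keeping every resulting configuration small enough to be reducible. In the Appel--Haken proof \cite{4-color-theorem} this required an unavoidable set of well over a thousand configurations, each certified reducible by machine; the later Robertson--Sanders--Seymour--Thomas argument trimmed this to $633$ configurations and $32$ discharging rules but remains irreducibly computer-assisted. I would therefore present the discharging framework and the Kempe-chain reducibility criterion explicitly, but delegate the exhaustive verification of a complete unavoidable set to the computer certificates of \cite{4-color-theorem}, since no purely hand proof of such a set is currently known.
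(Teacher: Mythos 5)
This statement is not proved in the paper at all: it is the classical Four-Color Theorem, imported as a black box via the citation to Appel and Haken, and later invoked once in Section 4 to properly 4-color the auxiliary planar graph $H$. Your outline faithfully reproduces the architecture of the actual cited proof --- minimal counterexample, reduction to a triangulation, the charge identity $\sum_v (6-d(v))=12$, Kempe-chain elimination of vertices of degree at most $4$, and the unavoidability/reducibility dichotomy settled by discharging plus machine-verified ring analysis --- and you correctly acknowledge that the exhaustive verification of the unavoidable set cannot be carried out by hand, so as a blind attempt it is as complete as any non-computer-assisted treatment can be, and it is entirely consistent with the paper's decision to cite rather than prove the result.
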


\begin{theorem}[Wagner's Theorem, Wagner \cite{wa}]\label{minor-planar}
A graph $G$ is planar if and only if $K_{3,3}$ and $K_5$ are not minors of $G$. 
\end{theorem}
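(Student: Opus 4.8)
The plan is to prove both directions of the equivalence, with the forward direction routine and the reverse direction carrying all the weight. For the forward direction I would first record that the class of planar graphs is closed under taking minors: deleting a vertex or an edge clearly preserves an embedding, and contracting an edge $e=xy$ can be realized by sliding $y$ along the drawn arc of $e$ into $x$, merging the two rotation systems without introducing crossings. Since $K_5$ and $K_{3,3}$ are themselves non-planar — which I would verify by Euler's formula, as a simple planar graph on $n\ge 3$ vertices has at most $3n-6$ edges, and at most $2n-4$ if it is triangle-free, contradicting $\binom{5}{2}=10>9$ for $K_5$ and $9>8$ for $K_{3,3}$ — no planar graph can contain either as a minor.

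For the reverse direction I would prove the equivalent positive statement directly: every graph with no $K_5$ and no $K_{3,3}$ minor is planar, by induction on $|V(G)|$. The first reduction is to the $3$-connected case. If $G$ is disconnected or has a cut vertex, I would split $G$ at the cut vertex into its blocks, observe that each block is again free of the two forbidden minors, embed each block by the induction hypothesis, and glue the embeddings at the shared vertex by placing each block inside a face of another. If $G$ is $2$-connected but has a separating pair $\{u,v\}$, I would cut along $\{u,v\}$ into the pieces $G_1,\dots,G_t$, add the virtual edge $uv$ to each piece, note that each augmented piece is a minor of $G$ (the added edge $uv$ is realized by a $u$--$v$ path through any other piece) and hence still forbidden-minor-free, embed each by induction with $uv$ on the outer boundary, and paste the pieces together along $uv$. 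These gluing steps yield a planar embedding of $G$, so it remains to treat the case that $G$ is $3$-connected.

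The core of the argument is then the $3$-connected case, which I would handle by continuing the same induction on $|V(G)|$ together with the classical fact that every $3$-connected graph on at least five vertices has an edge $e=xy$ for which the contraction $G/e$ is again $3$-connected. I would take such an edge, note that $G/e$ inherits the absence of $K_5$ and $K_{3,3}$ minors, and apply the induction hypothesis to obtain a planar embedding of $G/e$; because $G/e$ is $3$-connected, Whitney's theorem makes this embedding combinatorially unique, so its face structure is canonical. Writing $v_{xy}$ for the contracted vertex, I would delete $v_{xy}$ and examine the face $F$ into which it was embedded: all neighbors of $x$ and $y$ in $G$ lie on the boundary cycle $C$ of $F$. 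The goal is to reinsert $x$ and $y$ inside $F$, joined by the edge $e$, so that each reaches its own neighbors without crossings, which is possible precisely when the neighbors of $x$ occupy a single arc of $C$ bounded by common neighbors of $x$ and $y$, with the neighbors of $y$ filling the complementary arcs.

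The step I expect to be the main obstacle is exactly this reinsertion: showing that whenever the embedding cannot be extended, one of the two forbidden minors must already be present. The case analysis runs on how the neighbors of $x$ and $y$ are distributed around $C$. If $x$ and $y$ share three common neighbors on $C$, then together with the edge $xy$ these yield a $K_5$ minor; if instead the private neighbors of $x$ and of $y$ interleave around $C$ so that they cannot be separated into two disjoint arcs, then $x$, $y$, and the alternating neighbors furnish a $K_{3,3}$ minor. Ruling out both obstructions is exactly what the hypothesis on $G$ buys us, and verifying that every unembeddable configuration falls into one of these two templates — while tracking the $3$-connectivity of $G$, which forces $C$ to bound a single face and forbids degenerate distributions of the neighbors — is the delicate heart of the proof.
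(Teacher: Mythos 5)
The paper does not prove this statement at all: Wagner's theorem appears in Section~\ref{preliminary} purely as a cited classical tool (Wagner \cite{wa}), so there is no internal proof to compare yours against, and your proposal must be judged on its own terms. On those terms it is a correct outline of the standard textbook proof (essentially the one in Diestel, going back to Thomassen's streamlining of Wagner/Kuratowski): minor-closedness of planarity plus Euler-formula non-planarity of $K_5$ and $K_{3,3}$ for the easy direction; block and $2$-cut decompositions with virtual edges to reduce to the $3$-connected case, where the virtual edge is legitimately a minor via a $u$--$v$ path in a complementary piece; and the contraction argument with reinsertion of $x$ and $y$ into the face vacated by $v_{xy}$, with the $K_5$/$K_{3,3}$ dichotomy (three common neighbors, or interleaved private neighbors on the boundary cycle $C$) as the only obstructions. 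Two remarks. First, you lean on a nontrivial lemma you should cite or prove: the existence of an edge $e$ in a $3$-connected graph on at least five vertices with $G/e$ still $3$-connected (Tutte; short inductive proof due to Thomassen). The companion fact you actually need is not Whitney's uniqueness theorem, which is superfluous here, but the simpler observation that since $G/e$ is $3$-connected, $(G/e)-v_{xy}$ is $2$-connected, so the face that contained $v_{xy}$ is bounded by a cycle $C$ containing all neighbors of $x$ and $y$; invoking Whitney would also be mildly circular, as its usual statement presupposes planarity machinery at the same level as what you are proving. Second, in the reinsertion step the cleaner bookkeeping is asymmetric: place $x$ at the position of $v_{xy}$, draw its edges to $C$, and then argue that all neighbors of $y$ lie in the closed segment of $C$ between two consecutive neighbors of $x$ --- your symmetric phrasing (``neighbors of $x$ occupy a single arc bounded by common neighbors'') describes the same configurations but makes the case analysis harder to verify. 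With those repairs your sketch completes to a full proof.
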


For each vertex $v$ in a graph $G$, let $L(v)$ denote a list of colors available at $v$. A \textit{list coloring} of $G$ is a proper coloring $f$ such that $f(v)\in L(v)$ for each vertex $v$ of $G$. We say that $G$ is \textit{$L$-choosable} if it has a list coloring under $L$. We say that $G$ is \textit{degree-choosable} if $G$ has a list coloring for all lists $L$ with $|L(v)|=d(v)$.  A graph $G$ is {\it $2$-connected} if it is connected and the removal of any vertex from $G$ leaves it connected. A \textit{block} of $G$ is a maximal 2-connected subgraph of $G$ or a cut-edge. Not all graphs are degree-choosable. For example, odd cycles and complete graphs are not degree choosable. The following result classifies all graphs $G$ that are degree-choosable.

\begin{theorem}[Borodin \cite{borodin} and Erd\H os, Rubin, and Taylor \cite{erdos}]\label{list}
Let $G$ be a connected graph having a block that is not an odd cycle nor a complete graph. The graph $G$ is degree-choosable.
\end{theorem}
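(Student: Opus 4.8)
The plan is to establish the theorem through a greedy extension argument seeded by a small, explicitly colorable induced subgraph. Throughout, fix a list assignment $L$ with $|L(v)|=d(v)$ for every vertex $v$; the goal is to produce a proper $L$-coloring of $G$. The backbone is the following reduction: if $G$ is connected and contains an \emph{induced} subgraph $H$ that is itself degree-choosable, then $G$ is $L$-colorable. To see this, I would order $V(G)$ by non-increasing distance (in $G$) to $V(H)$, so that the vertices of $H$ come last, and color greedily. Every vertex $v\notin V(H)$ has a neighbor strictly closer to $H$, hence a neighbor appearing later in the order; thus at most $d(v)-1=|L(v)|-1$ of its neighbors are colored when its turn comes, and a color is always available. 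Once all vertices outside $H$ are colored, each $v\in V(H)$ has lost at most $d_G(v)-d_H(v)$ colors from its list to already-colored external neighbors, so its surviving list has size at least $d_H(v)$; since $H$ is induced, these are the only constraints among the vertices of $H$, and degree-choosability of $H$ completes the coloring.

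It remains to locate a degree-choosable induced subgraph inside $G$. By hypothesis $G$ has a block $B$ that is neither a complete graph nor an odd cycle; since a block is an induced subgraph of $G$, it suffices to find a degree-choosable induced subgraph of $B$, and $B$ is $2$-connected on at least three vertices. I would split into two cases. If $B$ is a cycle, then it is an even cycle (odd cycles being excluded by hypothesis), and I take $H=B$. If $B$ is not a cycle, then I would invoke the structural fact (due to Rubin) that a $2$-connected graph which is neither complete nor a cycle contains an induced \emph{theta graph} -- two vertices joined by three internally disjoint paths -- and take $H$ to be such a theta.

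The two base cases then require that even cycles and theta graphs are degree-choosable. For an even cycle this is the classical statement that even cycles are $2$-choosable: if all lists coincide one alternates the two colors around the cycle using evenness, and otherwise one starts at an edge whose endpoints have different lists and propagates a forced choice around the cycle so that no conflict arises at the closing edge. This is precisely the step where evenness is indispensable -- odd cycles are not $2$-choosable -- which explains why odd-cycle blocks are excluded from the hypothesis. For a theta graph the two branch vertices have degree $3$ and hence lists of size $3$, giving enough slack to fix their colors compatibly and then color each of the three connecting paths, whose interior vertices have size-$2$ lists, by a short case analysis.

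I expect the main obstacle to be the structural lemma: producing an \emph{induced} theta, rather than merely a theta subgraph, in a general $2$-connected non-complete non-cycle graph, which needs a careful minimal-configuration or ear-decomposition argument to rule out unwanted chords. The even-cycle base case is the other delicate point, since it is the unique place where the parity hypothesis is genuinely used and where the analogous odd-cycle statement fails. The remaining ingredients -- the greedy extension and the degree-bookkeeping that leaves each external vertex a free color -- are routine once the ordering by distance to $H$ is set up and the inducedness of $H$ is exploited to control chords.
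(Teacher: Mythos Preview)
The paper does not prove this theorem; it is quoted as a classical result of Borodin and of Erd\H{o}s, Rubin, and Taylor and then used as a black box in Sections~\ref{preliminary}--\ref{proof}. There is therefore no in-paper proof to compare your proposal against.

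Evaluated on its own, your outline is the standard proof of the result and is essentially correct: the greedy extension from an induced degree-choosable subgraph $H$ (ordered by distance to $H$) is exactly the usual reduction, and you have correctly identified Rubin's structural lemma and the $2$-choosability of even cycles as the two nontrivial ingredients. One imprecision worth fixing: your formulation of Rubin's lemma is not quite the classical one. The version in the Erd\H{o}s--Rubin--Taylor paper asserts that a $2$-connected graph which is neither complete nor an odd cycle contains an induced \emph{even cycle with at most one chord}, not necessarily an induced theta. Thus in your Case~2 ($B$ not a cycle) the induced subgraph Rubin hands you may be a chordless even cycle rather than a theta; this is harmless since you already treat even cycles as a base case, but the case split should be phrased as ``an induced even cycle or an induced even cycle with one chord'' rather than ``an induced theta.'' With that adjustment the argument goes through.
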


Theorem \ref{list} implies the following Corollary. 

\begin{cor}\label{cor-list}
Let $G$ be a connected graph and $L$ be a list assignment on the vertices $x\in G$ such that $|L(x)|\ge d(x)$ for all $x$.
If there each vertex $v\in V(G)$ such that $|L(v)|>d(v)$, then $G$ is $L$-choosable.
\end{cor}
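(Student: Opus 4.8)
The plan is to establish Corollary~\ref{cor-list} directly by a greedy coloring along a carefully chosen vertex ordering, using the single vertex with surplus list to absorb the last constraint. Let $v$ be a vertex with $|L(v)|>d(v)$, which exists by hypothesis. Since $G$ is connected, I would fix a spanning tree $T$ of $G$ and root it at $v$. I then order the vertices of $G$ as $x_1,\dots,x_n$ with $x_n=v$ so that the distance from $v$ in $T$ is nonincreasing along the sequence (a reverse breadth-first order from $v$). The key structural property this buys is that every vertex other than $v$ appears before its $T$-parent, and since $T\subseteq G$ that parent is a genuine neighbor in $G$.

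I would then color $x_1,\dots,x_n$ greedily in this order, at each step choosing for $x_i$ a color from $L(x_i)$ that differs from the colors already assigned to its neighbors. The point is to check that a valid choice always exists. For each $x_i\neq v$, its parent in $T$ lies later in the order and is therefore still uncolored when $x_i$ is processed; hence at most $d(x_i)-1$ of the neighbors of $x_i$ carry colors, and because $|L(x_i)|\ge d(x_i)$ there remains at least one admissible color. Finally, when $v$ is colored last, all $d(v)$ of its neighbors may already be colored, but the strict inequality $|L(v)|>d(v)$ guarantees a free color in $L(v)$. This yields a proper list coloring, so $G$ is $L$-choosable.

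I expect the only genuinely load-bearing step to be the verification that the ordering has the stated property, and this is immediate once the spanning tree is rooted at the surplus vertex $v$: it is exactly the device that lets the extra color at $v$ ``propagate'' outward so that every other vertex keeps one uncolored neighbor at the moment it is processed. I would also note that this argument is consistent with Theorem~\ref{list}: when $G$ has a block that is neither a complete graph nor an odd cycle, degree-choosability already furnishes the coloring after shrinking each list to size $d(x)$, while the remaining Gallai-tree case is precisely where the surplus at $v$ is needed. The spanning-tree argument above handles all cases uniformly and so is the cleaner route.
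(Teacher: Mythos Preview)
Your argument is correct. The spanning-tree ordering does exactly what you claim: every non-root vertex has its $T$-parent strictly closer to $v$, hence later in the order, so at most $d(x_i)-1$ neighbors are colored when $x_i$ is processed, and the surplus at $v$ handles the final step.

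The paper takes a different route. Rather than arguing directly, it reduces to Theorem~\ref{list}: it attaches a new vertex $u$ to $v$ and hangs a pendant even cycle through $u$, giving the cycle vertices lists of size $3$. In the enlarged graph every vertex has list size at least its degree (the extra edge at $v$ is absorbed by the strict inequality $|L(v)|>d_G(v)$), and the even cycle is a block that is neither complete nor odd, so Theorem~\ref{list} applies and the restriction to $G$ gives the desired coloring. This is slick if one is willing to invoke the Borodin--Erd\H{o}s--Rubin--Taylor theorem as a black box, but it is logically heavier: that theorem is substantially harder than the corollary being proved. Your greedy argument is the standard elementary proof of this fact, is entirely self-contained, and in particular does not depend on Theorem~\ref{list} at all. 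The trade-off is that the paper's proof is one line once Theorem~\ref{list} is in hand, whereas yours requires setting up the ordering; but conceptually yours is the cleaner and more transparent route.
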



\begin{proof}
Add a vertex $u$, an edge $uv$ to $G$, and add a pendant even cycle $C$ to $u$ in this graph. Give all vertices of $C$ a list of size $3$ and keep the list $L$ on other vertices of $G$. Let $H$ be the resulting graph and $L'$ be the list we defined on vertices of $H$. Since $C$ is a block of $H$, by Theorem~\ref{list} the graph $H$ is $L'$-choosable, which implies that $G$ is $L$-choosable. 
\end{proof}

The following propositions are known results on proper list coloring of complete graphs and odd cycles.

\begin{pro}\label{complete-graph}
Let $L$ be a list assignment on the vertices of the complete graph $K_n$ with vertex set $\{v_1,\ldots,v_n\}$ in such a way that $|L(v_i)|=n-1$ for each $i$ and $L(v_1)\neq L(v_k)$. The graph $K_n$ is $L$-choosable.
\end{pro}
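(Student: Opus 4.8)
The plan is to recognize that a proper $L$-coloring of $K_n$ is precisely a choice of one color from each list with all $n$ chosen colors distinct, that is, a system of distinct representatives (SDR) for the family $\{L(v_1),\dots,L(v_n)\}$. By Hall's theorem, such an SDR exists if and only if $\bigl|\bigcup_{v\in S}L(v)\bigr|\ge |S|$ for every $S\subseteq\{v_1,\dots,v_n\}$, so the whole argument reduces to verifying this marriage condition.

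First I would dispose of every proper subset $S$. Since each list has size exactly $n-1$ and $|S|\le n-1$, a single list $L(v)$ with $v\in S$ already satisfies $|L(v)|=n-1\ge |S|$, and hence $\bigl|\bigcup_{v\in S}L(v)\bigr|\ge n-1\ge |S|$. Thus Hall's condition holds automatically except possibly for the full set $S=\{v_1,\dots,v_n\}$.

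The one substantive step, and the main obstacle, is the case $S=V(K_n)$, where I must show $\bigl|\bigcup_{i=1}^n L(v_i)\bigr|\ge n$; this is exactly where the hypothesis $L(v_1)\ne L(v_k)$ is used. Suppose for contradiction that the union had at most $n-1$ colors. Each $L(v_i)$ would be a subset of this union of size at most $n-1$, yet $|L(v_i)|=n-1$, forcing every list to coincide with the union and in particular giving $L(v_1)=L(v_k)$, a contradiction. Hence the union has at least $n$ colors, Hall's condition holds for all $S$, and the desired SDR, equivalently the proper $L$-coloring, exists.

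Alternatively, one can give a constructive argument that appeals directly to Corollary~\ref{cor-list}. Since $|L(v_1)|=|L(v_k)|=n-1$ and the two lists differ, the set $L(v_1)\setminus L(v_k)$ is nonempty; pick $c$ in it and color $v_1$ with $c$. Deleting $v_1$ leaves $K_{n-1}$ on $\{v_2,\dots,v_n\}$, in which every vertex has degree $n-2$. Removing $c$ from the remaining lists shrinks each by at most one, so every list still has size at least $n-2$, while $L(v_k)$ is untouched and retains size $n-1>n-2$. Corollary~\ref{cor-list} then yields a proper list coloring of $K_{n-1}$, and since $c$ was removed from all those lists no vertex repeats the color of $v_1$, completing the proper $L$-coloring of $K_n$.
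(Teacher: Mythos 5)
Both of your arguments are correct, and both differ from the paper's own proof, which is a direct greedy coloring: the paper colors $v_1$ with a color in $L(v_1)-L(v_n)$, then colors $v_2,\ldots,v_{n-1}$ one at a time (each has $n-1$ available colors and at most $n-2$ constraints), and saves $v_n$ for last, where the fact that $c(v_1)\notin L(v_n)$ leaves a free color. Your first argument instead observes that a proper $L$-coloring of $K_n$ is exactly a system of distinct representatives for the lists and verifies Hall's condition, with the hypothesis $L(v_1)\neq L(v_k)$ entering only to rule out a union of size $n-1$ for the full vertex set; this is clean, handles the index $k$ without any implicit relabeling, and isolates precisely where the hypothesis is needed, at the cost of invoking the marriage theorem rather than being self-contained and order-explicit like the paper's greedy procedure. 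Your second argument shares the paper's first step (color $v_1$ from $L(v_1)\setminus L(v_k)$, nonempty since the two lists have equal size and differ) but then delegates the rest to Corollary~\ref{cor-list} applied to $K_{n-1}$, where $v_k$ keeps a list of size $n-1>n-2$; this is legitimate since the Corollary is proved before the Proposition and independently of it, and it is shorter, though it rests on the Borodin--Erd\H{o}s--Rubin--Taylor machinery where the paper's argument (and your Hall argument) need nothing beyond elementary counting.
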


\begin{proof}
First color $v_1$ by a color in $ L(v_1)-L(v_n)$. Now choose appropriate colors for vertices $v_2,\ldots,v_{n-1}$ from their lists respectively in such a way that adjacent vertices get different colors. At each step  the vertex $v_i$ must have a color different from the color of at most $n-2$ other vertices. Having $|L(v_i)|=n-1$, we are able to choose these colorings. Finally since the color of $v_1$ does not belong to $L(v_n)$, it is enough to choose a color for $v_n$ to be a color in $L(v_n)$ and different from the colors of $v_2,\ldots,v_{n-1}$ to obtain a proper coloring of $K_n$. 
\end{proof}

\begin{pro}\label{odd-cycle}
Let $L$ be a list assignment on the vertices of an odd cycle $C$ with vertices $v_1,\ldots, v_k$ so that $|L(v_i)|=2$ for each $i\in[k]$ and $L(v_1)\neq L(v_k)$. The cycle $C$ is $L$-choosable.
\end{pro}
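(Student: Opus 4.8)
The plan is to exploit the asymmetry encoded in the hypothesis $L(v_1)\neq L(v_k)$ in order to convert the cyclic coloring problem into a path coloring problem. Note that the oddness of $C$ will not actually be used anywhere; what drives the argument is only that the two lists on the endpoints of the path $v_1v_2\cdots v_k$ differ, together with the fact that $v_1$ and $v_k$ are adjacent on the cycle.

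First I would observe that, since $L(v_1)$ and $L(v_k)$ are distinct sets each of size $2$, the difference $L(v_1)\setminus L(v_k)$ is nonempty. I would then fix a color $a\in L(v_1)\setminus L(v_k)$ and set $c(v_1)=a$. The crucial consequence is that $a\notin L(v_k)$, so the ``wrap-around'' edge $v_kv_1$ can never become monochromatic once $v_1$ is colored $a$, regardless of which admissible color $v_k$ eventually receives.

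Next I would color the remaining vertices greedily along the path, in the order $v_2,v_3,\ldots,v_{k-1}$. When it is the turn of $v_i$ for $2\le i\le k-1$, its only already-colored neighbor on the cycle is $v_{i-1}$; since $|L(v_i)|=2$, at most one color of $L(v_i)$ is forbidden, so a valid choice for $c(v_i)$ remains. Finally I would color $v_k$, whose colored neighbors are $v_{k-1}$ and $v_1$. Because $c(v_1)=a\notin L(v_k)$, the constraint imposed by $v_1$ is vacuous, so only $c(v_{k-1})$ must be avoided; again $|L(v_k)|=2$ guarantees an available color. The resulting assignment is a proper $L$-coloring of $C$.

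There is essentially no hard step in this argument: everything hinges on the single observation that choosing $c(v_1)\in L(v_1)\setminus L(v_k)$ neutralizes the one edge, $v_kv_1$, that closes the cycle. After that, a one-pass greedy coloring of the path succeeds because each vertex has a list strictly larger than the number of constraints it faces at the moment it is colored. The only point requiring any care is to sequence the greedy coloring so that $v_k$ is treated last, ensuring that it never confronts two distinct forbidden colors simultaneously.
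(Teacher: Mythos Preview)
Your proof is correct and follows essentially the same approach as the paper: pick $c(v_1)\in L(v_1)\setminus L(v_k)$, then greedily color $v_2,\ldots,v_{k-1}$ each avoiding its predecessor, and finally color $v_k$ avoiding only $c(v_{k-1})$. Your additional remark that the oddness of $C$ is never used is accurate and worth noting.
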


\begin{proof}
First color $v_1$ by a color in $ L(v_1)-L(v_k)$. Now choose appropriate colors for vertices $v_2,\ldots,v_{k-1}$ from their lists respectively in such a way that adjacent vertices get different colors. At each step the vertex $v_i$ must have a color different from the color of $v_{i-1}$. Having $|L(v_i)|=2$, we are able to choose these colorings. Finally choose a color for $v_k$ to be a color in $L(v_k)$ and different from the color of $v_{k-1}$ to obtain a proper coloring of $C$. 
\end{proof}

The following Proposition is an excercie in \cite{w}.  

\begin{pro}\label{pro-walk}
Let $W$ be a closed walk of a graph $G$ in such a way that no edge is repeated immediately in $W$. The graph $G$ contains a cycle.
\end{pro}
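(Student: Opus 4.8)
The plan is to prove the contrapositive: if $G$ contains no cycle then $G$ is a forest, and a forest admits no closed walk without an immediate edge repetition. So I would assume $G$ is acyclic. Since a walk never leaves the component in which it starts, the closed walk $W = (v_0, v_1, \ldots, v_n)$ with $v_n = v_0$ lies inside a single tree component $T$. First I would dispose of the degenerate cases: a length-$0$ walk has no edge to forbid, a length-$1$ closed walk would be a loop (impossible in a simple graph), and the only length-$2$ closed walk $v_0 v_1 v_0$ repeats the edge $v_0v_1$ immediately. Hence under the hypothesis we may assume $n \ge 3$, and in particular $v_1 \neq v_0$.

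The core of the argument is a \emph{highest point} observation. I would root $T$ at $v_0$ and let $\mathrm{dist}(\cdot,\cdot)$ denote distance in $T$. Because $T$ is a tree, every edge $xy$ satisfies $|\mathrm{dist}(v_0,x) - \mathrm{dist}(v_0,y)| = 1$, so consecutive walk vertices differ in distance from $v_0$ by exactly $1$. Setting $m = \max_i \mathrm{dist}(v_0, v_i)$, the neighbor $v_1$ of $v_0$ gives $m \ge 1$, so the maximum is attained at some interior index $i$ with $1 \le i \le n-1$ rather than at the endpoints (which are $v_0$). For such an $i$, maximality forces $\mathrm{dist}(v_0, v_{i-1}) = \mathrm{dist}(v_0, v_{i+1}) = m-1$. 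In a tree, $v_i$ has a unique neighbor at distance $m-1$ from the root, so $v_{i-1} = v_{i+1}$; this makes the edges $v_{i-1}v_i$ and $v_iv_{i+1}$ identical, contradicting the hypothesis. Therefore $G$ cannot be acyclic, i.e.\ $G$ contains a cycle.

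The step I expect to be the only real obstacle is the claim that a vertex of a tree has a unique neighbor strictly closer to a fixed root; this is precisely where acyclicity enters, and I would justify it via the uniqueness of paths in a tree, since two distinct such neighbors would yield two distinct $v_i$–$v_0$ paths and hence a cycle. I would also take care to confirm that the maximum distance is attained away from the endpoints, which is exactly why reducing to $n \ge 3$ and observing $m \ge 1$ matters. An alternative route would be to choose a shortest non-backtracking closed walk and argue it is itself a cycle, but the splitting argument at repeated vertices is fiddly, so I prefer the cleaner maximum-distance argument above.
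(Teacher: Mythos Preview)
Your proof is correct and takes a genuinely different route from the paper's. The paper argues by induction on the length of $W$: if $W$ has no internal vertex repetition it is itself a cycle; otherwise the shortest subwalk between two occurrences of a repeated vertex is a strictly shorter non-backtracking closed walk, and the induction hypothesis applies. You instead prove the contrapositive via tree structure, using the fact that a vertex of maximum depth in a rooted tree has a unique neighbor of smaller depth to force an immediate backtrack. Your approach is arguably cleaner and isolates exactly where acyclicity enters (uniqueness of the parent). The paper's inductive argument has the slight advantage of being constructive: it locates the cycle among the edges of $W$ itself, which is how the proposition is actually applied later in the paper. Your argument recovers this as well once one notes that $W$ lies entirely in the subgraph spanned by its own edges, so one may replace $G$ by that subgraph from the outset.
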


\begin{proof}
We prove the assertion by applying induction on the length of $W$. Note that such a closed walk $W$ cannot have length 1 or 2.
 If $W$ has length 3, then it is a triangle, which is a cycle, as desired. Now suppose $W$ is a walk of length at least 4 in which no edge is repeated immediately. If there is no vertex repetition other than the first vertex, then $W$ is a cycle, as desired. Hence
suppose there is some other vertex repetition. Let $W'$ be the portion of $W$ between
the instances of such a repetition. In case we have several options for $W'$, we choose one to be the shortest such portion. The walk $W'$ is a shorter closed walk
than $W$ and has the property that no edge is repeated immediately, since $W$ has this property. By the induction hypothesis,  the subgraph of $G$ over the edges of $W'$ has a cycle, and thus $G$ contains a cycle.
\end{proof}
\section{Reducible Configurations}\label{reducible}
To prove Theorem \ref{main} we show that no counterexample exists to the statement of the theorem. Therefore we start by studying an edge-minimal counterexamples  $G$ of the theorem. If there are several such counterexamples, we choose $G$ to be a graph with the smallest number of vertices. 

During the proofs of the lemmas in this section, we look at a particular configuration that exists in $G$.  We use deletion of edges and vertices, and sometimes contracting  edges to obtain a new graph $H$ with smaller number of edges than $G$. As a result, the graph $H$ is not a counterexample any more. Hence $wd_3(H)\leq 6$. To obtain a contradiction, we use a $3$-weak-dynamic coloring of $H$ to find a 3-weak-dynamic coloring of $G$ using 6 colors. 


In a partial coloring of the vertices of a graph $G$, once a vertex has satisfied the requirements for a 3-weak-dynamic coloring (it sees at least three different colors in its neighborhood) we say the vertex is \textit{satisfied}. 

In the following we determine a set of reducible configurations via different lemmas.

\begin{lemma}\label{lemma:AdjacentDegree2s}
The edge-minimal graph $G$ with $wd_3(G)>6$ satisfies $\delta(G)\geq 2$. Moreover $G$ has no $2$-vertex with a $3^-$-neighbor.
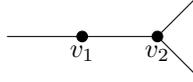
\begin{figure} [H]
\centering
\begin{tikzpicture}
\draw (-1,0) -- (1,0);
\draw (1.5,.5) -- (1,0) -- (1.5, -.5);
\filldraw (0,0) circle (2pt) node[anchor=north]{$v_1$};
\filldraw (1,0) circle (2pt) node[anchor=north] {$v_2$};
\end{tikzpicture}
\caption{A 2-vertex adjacent to a 3-vertex.}\label{Fig-2-3}
\end{figure}
\end{lemma}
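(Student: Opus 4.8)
The plan is to argue from the minimality of $G$: since $G$ is an edge-minimal counterexample, every graph with fewer edges than $G$ has a $3$-weak-dynamic coloring with $6$ colors. For each forbidden configuration I will delete a vertex to obtain a graph $H$ with fewer edges, take such a coloring $c$ of $H$, and extend it (after recoloring at most one vertex) to a $3$-weak-dynamic $6$-coloring of $G$, contradicting $wd_3(G)>6$. The bookkeeping fact I will use repeatedly is that the requirement at a vertex $w$ involves only the colors of $N(w)$: the vertex $w$ is satisfied once $|c(N(w))|\ge\min\{d(w),3\}$, so a vertex of degree $1$ is automatically satisfied, and if $w$ was satisfied before a single neighbor changed color, then at most $\min\{d(w),3\}-1\le 2$ colors are forbidden for that neighbor in order to keep $w$ satisfied.

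For $\delta(G)\ge 2$, first note that an isolated vertex imposes no constraint and can be deleted to contradict the minimal vertex count, so $\delta(G)\ge 1$. If $v$ is a $1$-vertex with neighbor $u$, set $H=G-v$; then $wd_3(H)\le 6$, and in any such coloring $v$ is automatically satisfied while only $u$ is threatened. Since $d_H(u)=d(u)-1$, either $u$ already sees $3$ colors in $H$ (when $d(u)\ge 4$) or it is short at most one color, so coloring $v$ outside the at most two colors $u$ already sees keeps $u$ satisfied. Six colors suffice, a contradiction.

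For the second statement, suppose $v_1$ is a $2$-vertex adjacent to a $3^-$-neighbor $v_2$; by the first part $d(v_2)\in\{2,3\}$. Write $N(v_1)=\{v_2,a\}$, and let the neighbors of $v_2$ other than $v_1$ be $b$, together with $c$ when $d(v_2)=3$. I would delete $v_1$ to form $H=G-v_1$ (which has fewer edges) and fix a $3$-weak-dynamic $6$-coloring $c$ of $H$; note that since $c$ satisfies $v_2$ in $H$, the colors $c(b)$ and $c(c)$ are distinct when $d(v_2)=3$. The hard part --- and the reason a naive extension fails --- is the requirement at the $2$-vertex $v_1$: it asks that $c(v_2)\ne c(a)$, a condition on the \emph{neighbors} of $v_1$ that the color of $v_1$ cannot influence. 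I will therefore repair it by recoloring $v_2$ rather than by choosing the color of $v_1$.

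Concretely, I first recolor $v_2$ with a color $\gamma$ that avoids $c(a)$ (which satisfies $v_1$) and avoids, for each remaining neighbor of $v_2$, the at most two colors needed to keep that neighbor satisfied; this forbids at most $1+2+2=5$ colors, so an admissible $\gamma$ remains among the six. I then assign $v_1$ a color $\beta$ making $v_2$ see $\min\{d(v_2),3\}$ colors (forbidding only the at most two colors in $\{c(b),c(c)\}$, which are distinct) and keeping $a$ satisfied (forbidding at most two further colors); at most four colors are forbidden, so $\beta$ exists. This yields a $3$-weak-dynamic $6$-coloring of $G$, a contradiction. The only extra care is in the degenerate cases where $a\in\{b,c\}$, so that $v_1v_2a$ is a triangle: then $v_2$ and $v_1$ are two freshly colored neighbors of $a$, the choices of $\gamma$ and $\beta$ interact, and one checks directly that the combined forbidden set still leaves an admissible pair because $a$ must see only $\min\{d(a),3\}\le 3$ colors in all.
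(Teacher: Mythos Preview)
Your proposal is correct and follows essentially the same reducibility argument as the paper. The only cosmetic difference is that the paper deletes the edge $v_1v_2$ and then recolors both $v_1$ and $v_2$ (in that order), whereas you delete the vertex $v_1$, recolor $v_2$, and then color $v_1$; in both cases the restriction count is at most five for one vertex and at most four for the other, and the triangle degeneracy $a\in\{b,c\}$ is absorbed automatically since the second recoloring step re-establishes $a$'s satisfaction.
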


\begin{proof}
By the choice of $G$ the graph $G$ is connected. Therefore it has no isolated vertex. If $G$ has a vertex $u$ of degree $1$, then  $wd_3(G-u)\leq 6$, as $G-u$ has fewer edges than $G$. Therefore there exists a 3-weak-dynamic coloring of $G-u$ with colors $\{1,\ldots,6\}$. Extend this coloring by giving $u$ a color in $\{1,\ldots,6\}$ that is different from two colors in the second neighborhood of $u$. This new coloring is a 3-weak-dynamic coloring of $G$,   a contradiction. Hence $\delta(G)\geq 2$.

Now we prove that $G$ has no 2-vertex $v_1$ having a $3^-$-neighbor $v_2$. We prove $d(v_2)=3$ gives us a contradiction. The proof of the case that $d(v_2)=2$ is similar. Hence we suppose $d(v_2)=3$.  Let $H=G-\{v_1v_2\}$. Since $H$ has fewer edges than $G$, by the choice of $G$ we have $wd_3(H) \le 6$. Therefore, there exists $c:V(H)\rightarrow \{1,\ldots,6\}$ that is a 3-weak-dynamic coloring of $H$. We recolor $v_1$ and $v_2$ in $c$ to obtain a 3-weak-dynamic coloring of $G$.

Let $u_1$ be the other neighbor of $v_1$ in $G$ and let $u_2$ and $u_3$ be the other neighbors of $v_2$ in $G$.  Choose a color in $\{1,\ldots,6\}$ for $v_1$ that satisfies $v_2$ and $u_1$. Satisfying $v_2$ and $u_1$ requires at most four restrictions. Therefore a desired color for $v_1$ exists.  Similarly, choose a color in $\{1,\ldots,6\}$ for $v_2$ to be different from $c(u_1)$ and to satisfy $u_2$ and $u_3$. We have at most five restrictions for the coloring of $v_2$. With six available colors, a desired coloring for $v_2$ exists. Hence this new coloring is a 3-weak-dynamic coloring of $G$ with six colors, which is a contradiction.
\end{proof}

\begin{lemma}\label{4-4}
The edge-minimal graph $G$ with $wd_3(G)<6$ has no pair of adjacent vertices of degree at least 4. 
\end{lemma}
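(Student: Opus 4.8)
The plan is to delete the edge joining the two high-degree vertices and argue that a $3$-weak-dynamic coloring of the resulting smaller graph already serves, verbatim, as a $3$-weak-dynamic coloring of $G$, so that no recoloring is needed at all. Suppose toward a contradiction that $G$ contains adjacent vertices $u$ and $v$ with $d_G(u)\ge 4$ and $d_G(v)\ge 4$, and set $H=G-uv$. Since $H$ is planar and has strictly fewer edges than $G$, the edge-minimality of $G$ gives $wd_3(H)\le 6$; fix a $3$-weak-dynamic coloring $c:V(H)\to\{1,\ldots,6\}$. Because $V(H)=V(G)$, the map $c$ is already a full coloring of $V(G)$, and the task reduces to checking that every vertex of $G$ is satisfied under $c$.

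The key observation is that the degree threshold $4$ is exactly what makes deleting $uv$ ``free.'' Indeed $d_H(u)=d_G(u)-1\ge 3$ and $d_H(v)=d_G(v)-1\ge 3$, so the $3$-weak-dynamic property of $c$ in $H$ already forces $|c(N_H(u))|\ge 3$ and $|c(N_H(v))|\ge 3$. Passing back to $G$ only enlarges these neighborhoods, since $N_G(u)=N_H(u)\cup\{v\}$ and $N_G(v)=N_H(v)\cup\{u\}$; hence $|c(N_G(u))|\ge|c(N_H(u))|\ge 3$ and likewise for $v$. As $d_G(u),d_G(v)\ge 3$, each of $u$ and $v$ need only see three colors, so both are satisfied.

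For the remaining vertices there is nothing to do: removing the single edge $uv$ leaves the neighborhood of every $w\notin\{u,v\}$ unchanged, so $N_G(w)=N_H(w)$ and $w$ sees the same $\min\{3,d_G(w)\}$ colors it saw in $H$. Thus $c$ is a $3$-weak-dynamic coloring of $G$ with six colors, contradicting $wd_3(G)>6$.

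I expect this to be the easiest of the reducible configurations, because—unlike Lemma~\ref{lemma:AdjacentDegree2s}—no color has to be re-chosen; the only work is the degree bookkeeping showing the deleted edge costs nothing. The one point to state with care is that the minimality hypothesis genuinely applies to $H$ (it is planar with strictly fewer edges), together with the fact that a vertex of degree at least $3$ is required by definition to witness a full three colors, which is precisely what keeps $u$ and $v$ satisfied once the edge is restored.
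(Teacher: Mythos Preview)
Your argument is correct and is exactly the paper's approach: delete the edge $uv$, apply edge-minimality to get a 3-weak-dynamic coloring of $G-uv$, and observe that this same coloring works for $G$ since $u$ and $v$ still have degree at least $3$ in $G-uv$ and all other neighborhoods are unchanged. The paper states this in two lines, while you have spelled out the verification in full, but the content is identical.
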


\begin{proof}
Suppose $uv\in E(G)$ with $d(u),d(v)\geq4$. By the choice of $G$, we have $wd_3(G-uv)\leq 6$. But any 3-weak-dynamic coloring of $G-uv$ is also a 3-weak-dynamic coloring of $G$, so we obtain a contradiction.
\end{proof}







\begin{lemma}\label{lemma:4-3-3-4Configuration}
The edge-minimal graph $G$ with $wd_3(G) >6$ does not contain distinct vertices  $v_1,v_2,v_3,v_4,v_5,v_6$ such that $v_1v_2,v_2v_3,v_3v_4,v_2v_5,v_3v_6\in E(G)$,  $d(v_1) \ge 4$, $d(v_4) \ge 4$ and $d(v_2)=d(v_3)=d(v_5)=d(v_6)=3$ 
\begin{figure}[H]
\centering
\begin{tikzpicture}[xscale = 0.75, yscale = 0.75]
\draw (-1.75,0) -- (-1,0);
\draw (-1,0) -- (0,0);
\draw (0,0) -- (1.5,0);
\draw (1.5,0) -- (2.5,0);
\draw (2.5,0) -- (3.25,0);
\draw (-1.75,-.75) -- (-1,0) -- (-1.75,.75);
\draw (3.25,.75) -- (2.5,0) -- (3.25,-.75);
\draw (0,0) -- (0,-1);
\draw (.45,-1.75) -- (0,-1) -- (-.45,-1.75);
\draw (1.5,0) -- (1.5,-1);
\draw (1.05,-1.75) -- (1.5,-1) -- (1.95,-1.75);
\filldraw (-1,0) circle (3pt) node[anchor=south] {$v_1$};
\filldraw (0,0) circle (3pt) node[anchor=south] {$v_2$};
\filldraw (1.5,0) circle (3pt) node[anchor=south] {$v_3$};
\filldraw (0,-1) circle (3pt) node[anchor=east] {$v_5$};
\filldraw (1.5,-1) circle (3pt) node[anchor=west] {$v_6$};
\filldraw (2.5,0) circle (3pt) node[anchor=south] {$v_4$};
\filldraw (-2,0) circle (1pt);
\filldraw (-2,0.25) circle (1pt);
\filldraw (-2,-0.25) circle (1pt);
\filldraw (3.5,0) circle (1pt);
\filldraw (3.5,0.25) circle (1pt);
\filldraw (3.5,-0.25) circle (1pt);
\end{tikzpicture}
\caption{Adjacent 3-vertices with $3$-neighbors and $4^+$-neighbors.}\label{Fig-4-3-3-4}
\end{figure}
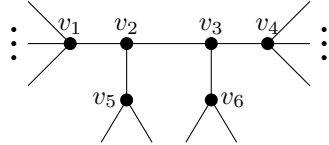
\end{lemma}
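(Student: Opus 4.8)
The plan is to find an edge whose deletion both shrinks $G$ (so minimality applies) and preserves enough information about the surrounding colors to let me repair the two degree-$3$ vertices $v_2$ and $v_3$. The right choice is the middle edge $v_2v_3$: set $H=G-v_2v_3$. By edge-minimality of $G$ there is a $3$-weak-dynamic coloring $c\colon V(H)\to\{1,\dots,6\}$. First I would record two facts. Since $v_1,v_4,v_5,v_6$ have exactly the same neighborhoods in $H$ as in $G$ (note $v_1v_3,v_4v_2\notin E(G)$ because $v_2,v_3$ have degree $3$), they are already satisfied by $c$ in $G$. Moreover, in $H$ the vertex $v_2$ has degree $2$ with neighbors $v_1,v_5$ and $v_3$ has degree $2$ with neighbors $v_4,v_6$; being satisfied there forces $c(v_1)\neq c(v_5)$ and $c(v_4)\neq c(v_6)$. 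These two inequalities are exactly what I will need, and they are the reason to delete $v_2v_3$ rather than $v_1v_2$ or $v_3v_4$ (deleting an edge at $v_1$ or $v_4$ would destroy one of them).

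The remaining task is to recolor $v_2$ and $v_3$ so that both become satisfied while $v_1,v_4,v_5,v_6$ stay satisfied. Because the coloring need not be proper, the edge $v_2v_3$ imposes no constraint between $c(v_2)$ and $c(v_3)$, so the two choices decouple: $c(v_3)$ controls the satisfaction of $v_2$, while $c(v_2)$ controls the satisfaction of $v_3$. For $v_2$ I would list its forbidden colors: to satisfy $v_3$ it must avoid $\{c(v_4),c(v_6)\}$ (two colors, distinct by the above); to keep $v_5$ satisfied it must avoid the colors of $v_5$'s two other neighbors (distinct, else $v_5$ could not have seen three colors in $H$); and to keep $v_1$ satisfied it must avoid the colors of $v_1$'s remaining neighbors \emph{only if} those neighbors already display exactly two colors. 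The symmetric list applies to $v_3$, with $\{c(v_1),c(v_5)\}$, $v_6$, and $v_4$. When the degree-$\geq 4$ neighbor contributes no restriction, at most four colors are forbidden and a valid color remains, which disposes of the generic case.

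The hard part is precisely the ``tight'' case, where $v_1$ (or $v_4$) is a degree-$\geq 4$ vertex whose other neighbors display only two colors: then the three forbidden pairs can be pairwise disjoint and exhaust all six colors, leaving $v_2$ (or $v_3$) with no admissible color, so recoloring $v_2,v_3$ alone does not suffice. Here I would exploit that recoloring a vertex never changes what that vertex sees, only what its neighbors see; in particular recoloring the high-degree vertex $v_4$ cannot unsatisfy $v_4$ itself. Splitting on which of $\{c(v_4),c(v_6)\}$ equals the old color of $v_2$, in the subcase $c(v_2)=c(v_4)$ I would recolor $v_4$ to supply $v_3$ with a third color, and in the subcase $c(v_2)=c(v_6)$ I would recolor $v_6$ instead (and dually for $v_2$). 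By Lemma~\ref{4-4} every neighbor of $v_1,v_4$ has degree at most $3$, and Lemma~\ref{lemma:AdjacentDegree2s} further restricts their low-degree neighbors, which is what keeps the recoloring controllable; if recoloring such a neighbor itself threatens a tight vertex, I would break the tie by recoloring one additional degree-$\leq 3$ vertex. I expect this bookkeeping---keeping all of $v_1,\dots,v_6$ together with the finitely many recolored outside vertices simultaneously satisfied---to be the main obstacle, whereas the choice of reduction and the generic count are routine.
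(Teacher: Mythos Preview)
Your reduction $H=G-v_2v_3$ is different from the paper's, and it creates a genuine problem in the tight case that your proposal does not resolve. The paper instead takes $H=G-\{v_2,v_3\}$, deleting both \emph{vertices}. Since $d_G(v_1),d_G(v_4)\geq 4$, each of $v_1,v_4$ still has degree $\geq 3$ in $H$, so any $3$-weak-dynamic coloring of $H$ already satisfies $v_1$ and $v_4$ using only their neighbors in $V(H)$. Hence when $v_2,v_3$ are reinserted, $v_1$ and $v_4$ impose \emph{no} constraints on $c(v_2),c(v_3)$, and your tight case never arises. The price is that the inequalities $c(v_1)\neq c(v_5)$ and $c(v_4)\neq c(v_6)$ are no longer automatic; the paper enforces them by first recoloring $v_5$ and $v_6$ (each a degree-$3$ vertex, with at most five restrictions: one from $c(v_1)$ or $c(v_4)$, and two each from the two outside neighbors). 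After that, $c(v_2)$ and $c(v_3)$ have only four restrictions apiece.

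Your proposed fix for the tight case does not work as stated. In the subcase $c(v_2)=c(v_4)$ you suggest recoloring $v_4$, but $d_G(v_4)\geq 4$ and each of its (at least three) neighbors other than $v_3$ has degree $\leq 3$ by Lemma~\ref{4-4}, contributing up to two restrictions each; together with the two restrictions $c(v_2),c(v_6)$ needed to satisfy $v_3$, you face at least $2+2(d(v_4)-1)\geq 8$ restrictions, which six colors cannot meet. In the subcase $c(v_2)=c(v_6)$, recoloring $v_6$ can itself be tight: the four restrictions from the outside neighbors of $v_6$, together with $c(v_4)$ and $c(v_2)=c(v_6)$, can again exhaust all six colors. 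The vague fallback of ``recoloring one additional degree-$\leq 3$ vertex'' does not clearly terminate. The paper's vertex-deletion reduction sidesteps all of this.
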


\begin{proof}
On the contrary suppose $G$ contains this configuration.  Let $H=G-\{v_2,v_3\}$.  Since $H$ has fewer edges than $G$, we have $wd_3(H) \le 6$.  Thus there exists
$c:V(H)\rightarrow \{1,\ldots,6\}$ that is a 3-weak-dynamic coloring of $H$.  We use $c$ to find a 3-weak-dynamic coloring of $G$.  To obtain this new coloring, we first recolor $c(v_5)$ and $c(v_6)$ and then choose appropriate colors for $v_2$ and $v_3$.

  Let $N(v_5)=\{v_2,v'_5,v''_5\}$ and $N(v_6)=\{v_3,v'_6,v''_6\}$. By Lemma \ref{lemma:AdjacentDegree2s}, the vertices $v'_5,v''_5,v'_6,v''_6$ have degree at least 3 in $G$. We first redefine $c(v_5)$ to be a color in $\{1,\ldots,6\}$ and different from $c(v_1)$, different from two distinct colors on $N(v'_5)$, and different from two distinct colors on $N(v''_5)$.  Since we require at most five restrictions for $v_5$, such a coloring for $v_5$ exists.  Next, we redefine $c(v_6)$ to to be a color in $\{1,\ldots,6\}$ and different from $c(v_4)$, different from two distinct colors on $N(v'_6)$, and different from two distinct colors on $N(v''_6)$.  Since we require at most five restrictions for $v_6$, such a coloring for $v_6$ exists. We have not colored $v_2$ and $v_3$ yet, but we know that vertices $v_1$ and $v_4$ are already satisfied, because they have degree at least 3 in $H$ and they are satisfied in $H$.

 We then choose $c(v_2)$ to be a color in $\{1,\ldots,6\}$ different from $c(v_4), c(v_6), c(v'_5),c(v''_5)$. Since we have  four restrictions for $c(v_2)$,  such a coloring for $v_2$ exists.  Last, we choose $c(v_3)$ to differ from $c(v_1),c(v_5),c(v'_6),c(v''_6)$.  Therefore we obtain a 3-weak-dynamic coloring of $G$ using six colors, which is a contradiction.
\end{proof}

\begin{lemma}\label{lemma:C_3bottomadjacentC_3}
The edge-minimal graph $G$ with $wd_3(G)>6$ does not contain a  3-face with vertices $v_1,v_2,v_3$  adjacent to a $3$-face with vertices $v_1,v_3,v_4$, where $d(v_1)=d(v_3)=3$.
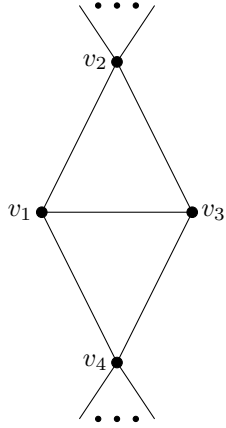
\begin{figure}[H]
\centering
\begin{tikzpicture}
\draw (-1,0) -- (0,2) -- (1,0) -- (0,-2) -- (-1,0) -- (1,0)
(-.5,2.75) -- (0,2) -- (.5,2.75) (-.5,-2.75) -- (0,-2) -- (.5,-2.75) ;
\filldraw (-.25,-2.75) circle (1pt);
\filldraw (0,-2.75) circle (1pt);
\filldraw (.25,-2.75) circle (1pt);
\filldraw (-.25,2.75) circle (1pt);
\filldraw (0,2.75) circle (1pt);
\filldraw (.25,2.75) circle (1pt);
\filldraw (-1,0) circle (2pt) node[anchor=east]{$v_1$};
\filldraw (0,2) circle (2pt) node[anchor=east]{$v_2$};
\filldraw (1,0) circle (2pt) node[anchor=west]{$v_3$};
\filldraw (0,-2) circle (2pt) node[anchor=east]{$v_4$};
\end{tikzpicture}
\caption{Two adjacent triangles.}\label{Fig-Two adjacent triangles}
\end{figure}
\end{lemma}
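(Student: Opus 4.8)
The plan is to delete the two degree-$3$ vertices $v_1,v_3$, reattach $v_2$ and $v_4$ through a single new degree-$2$ vertex in order to manufacture a coloring of $G-\{v_1,v_3\}$ in which $v_2$ and $v_4$ receive \emph{different} colors, and then extend that coloring to $v_1$ and $v_3$. First I would record the structural facts. Because $d(v_1)=d(v_3)=3$ and the two triangles share the edge $v_1v_3$, we have $N(v_1)=\{v_2,v_3,v_4\}$ and $N(v_3)=\{v_1,v_2,v_4\}$; thus each of $v_1,v_3$ sees only $\{v_2,v_4\}$ outside of the other, and consequently each can be satisfied only if $c(v_2)\ne c(v_4)$. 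By Lemma~\ref{lemma:AdjacentDegree2s} the vertices $v_2$ and $v_4$ have degree at least $3$, and when one of them has degree exactly $3$ its unique external neighbor also has degree at least $3$. Write $A=N(v_2)\setminus\{v_1,v_3\}$ and $B=N(v_4)\setminus\{v_1,v_3\}$, both nonempty; I will assume the generic case $v_2v_4\notin E(G)$ (the $K_4$ case is handled the same way).

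The delicate point is that we \emph{must} secure $c(v_2)\ne c(v_4)$, and a plain deletion of $v_1,v_3$ cannot guarantee this: if the borrowed coloring happens to give $c(v_2)=c(v_4)$ while $v_2,v_4$ have large degree, recoloring either one can be blocked by its many degree-$3$ neighbors. To sidestep this entirely I would let $H$ be obtained from $G$ by deleting $v_1,v_3$ and adding a new vertex $z$ adjacent to $v_2$ and $v_4$. Since $v_1,v_3$ meet only $\{v_2,v_4\}$ and both triangles are faces, $v_2$ and $v_4$ lie on a common face of $G-\{v_1,v_3\}$, so $z$ can be inserted planarly and $H$ is planar; moreover $|E(H)|=|E(G)|-3$, so $H$ is not a counterexample and admits a $3$-weak-dynamic coloring $c$ with colors $\{1,\dots,6\}$. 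As $z$ has degree $2$ it forces $c(v_2)\ne c(v_4)$. I would then discard $z$, keep $c$ on every remaining vertex, and color $v_1,v_3$ afterward. The bookkeeping that makes this clean is that only $v_2$ and $v_4$ are incident to $v_1$ or $v_3$, while every vertex of $A\cup B$ retains exactly its old neighborhood (it is adjacent only to $v_2$ or $v_4$, whose colors are unchanged, and never to $z,v_1,v_3$) and so stays satisfied; hence the only vertices still to check are $v_1,v_2,v_3,v_4$.

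It then remains to choose $c(v_1),c(v_3)$. Here $v_1$ forces $c(v_3)\notin\{c(v_2),c(v_4)\}$, $v_3$ forces $c(v_1)\notin\{c(v_2),c(v_4)\}$, and $v_2$ (resp.\ $v_4$) requires that $\{c(v_1),c(v_3)\}$ together with the already-fixed colors $c(A)$ (resp.\ $c(B)$) display three colors. When $d(v_2),d(v_4)\ge 4$, the coloring of $H$ guarantees $|c(A)|\ge 2$ and $|c(B)|\ge 2$, so these become mild joint conditions (``$c(v_1),c(v_3)$ are not both inside a fixed two-element set''), which a short case check resolves with colors to spare. The tightest case is $d(v_2)=d(v_4)=3$, where $A=\{a\}$, $B=\{b\}$ and we need $c(v_1),c(v_3)\notin\{c(v_2),c(v_4),c(a),c(b)\}$ together with $c(v_1)\ne c(v_3)$: choosing $c(v_1)$ first forbids at most four colors and then $c(v_3)$ at most five, so with six colors both survive. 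This yields a $3$-weak-dynamic coloring of $G$, the desired contradiction.

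The step I expect to be the main obstacle is conceptual rather than computational: realizing that the genuine constraint is $c(v_2)\ne c(v_4)$, that deletion alone cannot enforce it, and that inserting the single degree-$2$ vertex $z$ removes the obstruction at the level of the borrowed coloring. After that, verifying planarity of $H$ (that $v_2,v_4$ share a face) and enumerating the low-degree combinations for $v_2,v_4$ are routine, if slightly fiddly, verifications.
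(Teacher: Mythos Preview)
Your approach is correct and is in fact the same as the paper's, just phrased differently: the paper contracts the edge $v_1v_3$ to a single vertex $v_{1,3}$, and since $N(v_1)\cup N(v_3)\setminus\{v_1,v_3\}=\{v_2,v_4\}$, this produces exactly your graph $H$ with $v_{1,3}$ playing the role of your degree-$2$ vertex $z$ (so $c(v_2)\ne c(v_4)$ is forced for the same reason), after which the extension to $v_1,v_3$ proceeds by the same case analysis on $d(v_2),d(v_4)$. The contraction formulation is marginally cleaner because planarity is automatic, sparing you the common-face argument, but the content is identical.
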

\begin{proof}
On the contrary suppose $G$ contains this configuration. Contract the edge $v_1v_3$ into a single vertex $v_{1,3}$ 
and let $H$ be the resulting graph. Since $H$ has fewer edges than $G$, it follows that $wd_3(H)\leq 6$. Therefore there exists
$c: V(H) \to \{1,\ldots,6\}$ that is a 3-weak-dynamic coloring of $H$. To obtain a contradiction, we use $c$ to find a 3-weak-dynamic coloring of $G$. Note that the neighbors of the vertex $v_{1,3}$ in $H$ are $v_2$ and $v_4$, therefore we know $c(v_2)\neq c(v_4)$.

By Lemma \ref{lemma:AdjacentDegree2s}, we have $d_G(v_2)\geq 3$ and $d_G(v_4)\geq 3$. First suppose that $d_G(v_2)\geq 4$ and $d_G(v_4)\ge4$. In this case each of the vertices $v_2$ and $v_4$ has degree at least 3 in $H$. Hence $v_2$ sees at least three different colors on its neighborhood in $H$. As a result, $v_2$ sees at least two different colors on $N_H(v_2)-\{v_{1,3}\}$. Let's call these two colors $c_1$ and $c_2$. Similarly, suppose $c_3$ and $c_4$ are two different colors that appear on $N_H(v_4)-\{v_{1,3}\}$. We use the coloring of $c$ over $V(H)-\{v_{1,3}\}$ and then extend it to a 3-weak-dynamic coloring of $G$.

Choose $c(v_1)$ to be a color in $\{1,\ldots,6\}-\{c(v_2),c(v_4),c_1,c_2\}$. Then choose $c(v_3)$ to be a color in $\{1,\ldots,6\}-\{c(v_2),c(v_4),c_3,c_4\}$. The coloring $v_1$ is in such a way that the vertex $v_2$ gets satisfied and the coloring of $v_3$ is picked in such a way that $v_4$ becomes satisfied. Since the neighbors of $v_1$ get different colors and the neighbors of $v_3$ get different colors, this extension is indeed a 3-weak-dynamic coloring of $G$.

Now suppose that $d_G(v_2)=3$.  Let $c_1$ be the color of the neighbor of $v_2$ in $H$ that is different from $v_{1,3}$.   We use the coloring of $c$ over $V(H)-\{v_{1,3}\}$ and then extend it to a 3-weak-dynamic coloring of $G$.

Let $c_2$ and $c_3$ be colors on $N_H(v_4)-v_{1,3}$.  We choose $c_2$ to be different from $c_3$, when $d_G(v_4)\geq 4$. Otherwise $c_2=c_3$. Now  choose $c(v_3)$ to be a color in $\{1,\ldots,6\}-\{c(v_2),c(v_4),c_1,c_3,c_4\}$.  Then choose $c(v_1)$ to be a color in $\{1,\ldots,6\}-\{c(v_2),c(v_3),c(v_4),c_1,c_3\}$. These assignments satisfy the vertices $v_2$ and $v_4$. Since the neighbors of $v_1$ get different colors and the neighbors of $v_3$ get different colors, this extension is a 3-weak-dynamic coloring of $G$.
\end{proof}

\begin{lemma}\label{3-uniform triangle}
The edge-minimal graph $G$ with $wd_3(G)>6$ does not contain a triangle with vertices $v_1,v_2,v_3$, where $d(v_1)=d(v_2)=d(v_3)=3$. 
\end{lemma}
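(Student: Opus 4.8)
The plan is to argue by the standard edge-minimality/reducibility method used throughout this section. Suppose $G$ contains a triangle $v_1v_2v_3$ with $d(v_1)=d(v_2)=d(v_3)=3$. Since each $v_i$ has exactly three neighbors and two of them lie on the triangle, each $v_i$ has exactly one neighbor outside the triangle; call these outside neighbors $u_1,u_2,u_3$ respectively (they need not be distinct). By Lemma~\ref{lemma:AdjacentDegree2s} each $u_i$ is a $3^+$-vertex, so $u_i$ has at least two neighbors other than $v_i$.

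First I would delete the three triangle vertices to form $H=G-\{v_1,v_2,v_3\}$, which has fewer edges, so by minimality there is a $3$-weak-dynamic coloring $c:V(H)\to\{1,\dots,6\}$. The goal is to extend $c$ to $v_1,v_2,v_3$ so that every vertex is satisfied. The vertices of $G$ not on the triangle and not among the $u_i$ keep their old neighborhoods, so they remain satisfied. Each $u_i$ had degree at least $3$ in $G$, hence degree at least $2$ in $H$, so $u_i$ already sees two colors among its neighbors in $H$; I would record two such colors $a_i,b_i\in c(N_H(u_i))$ for each $i$. To keep $u_i$ satisfied once $v_i$ is colored I only need $v_1,v_2,v_3$ to collectively not destroy these — but since $u_i$'s neighbors in $H$ are untouched, each $u_i$ already sees $a_i\neq b_i$ and stays satisfied regardless of how we color the triangle. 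Thus the only vertices whose satisfaction I must engineer are $v_1,v_2,v_3$ themselves.

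The key step is then to color the triangle so that each $v_i$ sees three distinct colors on its neighborhood $\{v_{i-1},v_{i+1},u_i\}$ (indices mod $3$). Equivalently I want the three triangle vertices to receive colors making $c(v_1),c(v_2),c(v_3),c(u_1),c(u_2),c(u_3)$ locally rainbow at each triangle vertex. The cleanest route is to color $v_1,v_2,v_3$ with three pairwise-distinct colors, each avoiding the color of its own outside neighbor $u_i$: choosing $c(v_i)\neq c(u_i)$ and $c(v_1),c(v_2),c(v_3)$ all distinct guarantees that each $v_i$ sees $\{c(v_{i-1}),c(v_{i+1}),c(u_i)\}$, where the first two are distinct from each other and from $c(v_i)$, and $c(u_i)$ differs from $c(v_i)$; I would then just need $c(u_i)\notin\{c(v_{i-1}),c(v_{i+1})\}$ or, failing that, verify that the neighborhood still shows three colors. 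A convenient formalization is to phrase this as a list-coloring problem on the triangle $K_3$ itself: assign $v_i$ the list $L(v_i)=\{1,\dots,6\}\setminus(\text{forbidden set})$, where the forbidden set encodes the at most three colors that would leave $v_i$ unsatisfied or conflict, so each list has size at least three, and since $|L(v_i)|\ge 3 > 2 = d_{K_3}(v_i)$, Corollary~\ref{cor-list} (or direct counting) yields a proper list coloring of the triangle.

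The main obstacle I anticipate is the degenerate case where the outside neighbors coincide, e.g.\ $u_1=u_2$, or where an outside neighbor is itself low-degree and shares colors in an inconvenient way; in such cases the naive count of restrictions could momentarily fail and one must handle the overlap explicitly. I would dispatch these by a careful tally: coloring the $v_i$ one at a time in the order $v_1,v_2,v_3$, each new color must avoid its two already-colored triangle neighbors and be chosen so as to satisfy $v_i$ by showing a third color, which is at most a handful of forbidden values out of six, leaving room. Because $u_i$ always contributes a color already fixed in $H$ and we have six colors against neighborhoods of size three, the restriction count stays below six at every step, so a valid extension exists, contradicting that $G$ is a counterexample. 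Hence no such all-degree-$3$ triangle occurs in $G$.
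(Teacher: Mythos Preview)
Your argument contains a genuine gap at the step where you claim ``each $u_i$ already sees $a_i\neq b_i$ and stays satisfied regardless of how we color the triangle.'' This is false when $d_G(u_i)=3$. In that case $u_i$ has degree $2$ in $H$, so the $3$-weak-dynamic coloring of $H$ only guarantees that $u_i$ sees $\min\{3,2\}=2$ colors. Back in $G$, however, $u_i$ has degree $3$ and must see three distinct colors; the third must come from $c(v_i)$. So you are \emph{not} free to ignore the $u_i$'s: you must choose $c(v_i)$ to avoid two specified colors in $N_H(u_i)$.

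Once you account for this, your greedy/list argument breaks down. The constraints on $c(v_i)$ are: two colors from $N_H(u_i)$ (to satisfy $u_i$), the colors $c(v_{i-1})$ and $c(v_{i+1})$ (needed so that $v_{i-1}$ and $v_{i+1}$ are satisfied), and the colors $c(u_{i-1})$ and $c(u_{i+1})$ (also needed for $v_{i-1}$ and $v_{i+1}$). That is six forbidden values against six available colors, so neither a greedy order nor a $K_3$ list argument with lists of size $\ge 3$ goes through. The paper resolves this by splitting into cases: if some $u_i$ has degree $\ge 4$ it is already satisfied in $H$ and the count drops to at most five; if all $u_i$ have degree $3$, the paper \emph{recolors} $u_1,u_2,u_3$ as well, sets up a six-vertex dependency graph $D$ with lists of size at least the degree, and invokes the degree-choosability theorem (Theorem~\ref{list}). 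Your proposal is missing this recoloring idea, which is the crux of the hard case.
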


\begin{proof}
On the contrary suppose $G$ contains this configuration. For each $i$, let $N_G(v_i)-\{v_1,v_2,v_3\}=\{v'_i\}$.  By Lemma \ref{lemma:C_3bottomadjacentC_3} the vertices $v'_1,v'_2,v'_3$ are distinct.  Let $H=G-\{v_1,v_2,v_3\}$.  Since $H$ has fewer edges than $G$, we have $wd_3(H)\leq6$.  Thus there exists $c:V(H)\rightarrow\{1,\ldots,6\}$ that is a 3-weak-dynamic coloring of $H$.  We use $c$ to find a 3-weak-dynamic coloring of $G$.  By Lemma \ref{lemma:AdjacentDegree2s} we have $d_G(v'_1)\geq 3$, $d_G(v'_2)\geq 3$, and $d_G(v'_3)\geq 3$.  We consider two cases.

\begin{description}
\item [Case 1:]  $d_G(v_1')=d_G(v'_2)=d_G(v_3')= 3$.  
Let  $N_G(v'_i)-\{v_i\}=\{w_i,w'_i\}$.  We recolor $v_1',v'_2,v_3'$ and find appropriate colors for $v_1,v_2,v_3$.  We will call the set of vertices that we plan to color or recolor $S$.  Thus, $S=\{v_1,v_2,v_3,v_1',v'_2,v_3'\}$. 

Now we study the restrictions we must consider  for the coloring on $S$ to make sure that a 3-weak-dynamic coloring of $G$ is obtained.   We must choose  $c(v_1')$ to be a color different from $c(v_{2})$, $c(v_{3})$, as well as two distinct colors in $N_G(w_1)-\{v'_1\}$, and also two distinct colors in $N_G(w'_1)-\{v'_1\}$.   Similarly, $c(v'_2)$ must be a color different from $c(v_1)$, $c(v_3)$, and at most four other colors from vertices outside of $S$, and $c(v'_3)$ must be a color different from $c(v_1)$, $c(v_2)$, and at most four other colors from vertices outside of $S$.

    We must also choose  $c(v_1)$ to differ from $c(v_2),c(v_3),c(v'_2),c(v'_3)$ and also different from $c(w_1)$ and $c(w'_1)$. Similarly, $c(v_2)$ must be different from $c(v_1),c(v_3),c(v'_1),c(v'_3)$ and also different from $c(w_2),c(w'_2)$, and $c(v_3)$ must be different from $c(v_1),c(v_2),c(v'_1),c(v'_2),c(w_3),c(w'_3)$.

For each vertex $u$ in $S$ let $R(u)$ be the set of those colors we need to avoid for $c(u)$ that come from vertices outside $S$. By the above argument we have $|R(u)|\leq 2$ when $u=v_i$ and $|R(u)|\leq 4$ when $u=v_i'$ for each $i$. For each vertex $u$ in $S$ define $L(u)=\{1,\ldots,6\}-R(u)$. 

Now we form a graph $D$ that represents the dependencies among the vertices of $S$. $D$ has vertex set $S$. Two vertices of $S$ are adjacent in $D$ if we require them to have different colors. 

First suppose that no pair of vertices in $\{v'_1,v'_2,v'_3\}$ have a common neighbor.  See Figure \ref{Fig-triangle-with-all-3-neighbors}.  In this case, in $D$ each $v_i$ has degree $4$ and each $v'_i$ has degree 2. Consider the list of colors $L(u)$ we defined on each vertex $u$ of $S$. Each vertex $u$ has a list of size at least its degree in $D$. Note that $D$ has one component  which is 2-connected and it is not an odd cycle or a complete graph.    Therefore by Theorem \ref{list} the graph $D$ is $L$-choosable. Such a coloring for the vertices of $S$ extends $c$ over $H-S$ to a 3-weak-dynamic coloring of $G$. 

If one or the three pair of vertices in $\{v'_1,v'_2,v'_3\}$ have common neighbors in $G$, then in $D$ we will have one, two, or the three edges $v'_1v'_2,v'_1v'_3,v'_2v'_3$ present, while still each vertex has a list of size at least its degree. Similar to the above argument, Theorem \ref{list} implies that $D$ is $L$-choosable, as desired.

\begin{figure}[H]
\begin{subfigure}{0.5\textwidth}
\centering
\begin{tikzpicture}[xscale = 0.75, yscale = 0.75]
\foreach \x in {1,...,3}
    \filldraw (120*\x -90:1.5) circle (3pt);
\foreach \x in {1,...,3}
    \filldraw (120*\x -90:2.5) circle (3pt);
\foreach \x in {1,...,3}
    \draw (120*\x -90:1.5) -- (120*\x + 30:1.5);
\foreach \x in {1,...,3}
    \draw (120*\x -90:1.5) -- (120*\x -90:2.5);
\foreach \x in {1,...,3}
    \draw (120*\x -100:3.5) -- (120*\x -90:2.5) -- (120*\x -80:3.5);
\node[anchor = west] at (-90:1.5) {$v_1$};
\node[anchor = north] at (30:1.5) {$v_2$};
\node[anchor = north] at (150:1.5) {$v_3$};
\node[anchor = west] at (-90:2.5) {$v'_1$};
\node[anchor = north] at (30:2.5) {$v'_2$};
\node[anchor = north] at (150:2.5) {$v'_3$};

\end{tikzpicture}
\caption{$G$}
\end{subfigure}
\begin{subfigure}{0.5\textwidth}
\centering
\begin{tikzpicture}[xscale = 0.75, yscale = 0.75]
\foreach \x in {1,...,3}
    \filldraw (120*\x -90:1.5) circle (3pt);
\foreach \x in {1,...,3}
    \filldraw (120*\x -90:2.5) circle (3pt);
\foreach \x in {1,...,3}
    \draw (120*\x -90:1.5) -- (120*\x + 30:1.5);
\foreach \x in {0,...,2}
    \draw (120*\x -90:1.5) -- (120*\x + 30:2.5);
\foreach \x in {1,...,3}
    \draw (120*\x -90:1.5) -- (120*\x - 210:2.5);

\node[anchor = west] at (-90:1.5) {$v_1$};
\node[anchor = north] at (30:1.5) {$v_2$};
\node[anchor = north] at (150:1.5) {$v_3$};
\node[anchor = west] at (-90:2.5) {$v'_1$};
\node[anchor = north] at (30:2.5) {$v'_2$};
\node[anchor = north] at (150:2.5) {$v'_3$};

\end{tikzpicture}
\caption{$D(S)$}
\end{subfigure}
\caption{A triangle with all 3-vertices.}\label{Fig-triangle-with-all-3-neighbors}
\end{figure}
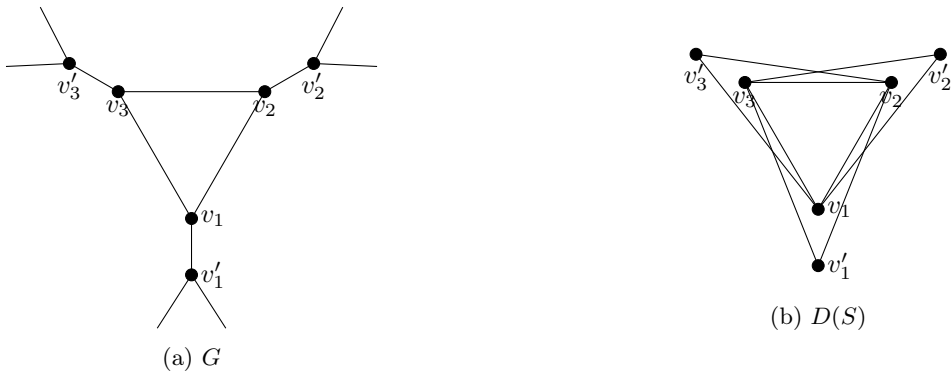

\item [Case 2:]  $d_G(v'_1)\geq 4$.  

Since $d_G(v'_1)\geq 4$, we have $d_H(v'_1)\geq 3$. Hence under the coloring $c$ in $H$, the vertex $v'_1$ sees at least three different colors on its neighborhood.  Therefore when trying to extend the coloring $c$ to a 3-weak-dynamic coloring of $G$, the vertex $v'_1$ is already satisfied.  In this case we keep the colors on all vertices of $H$. We then choose $c(v_1)$, $c(v_2)$, and $c(v_3)$ to extend $c$ to a 3-weak-dynamic coloring of $G$.

First choose $c(v_2)$ to be a color in $\{1,\ldots,6\}$ that is different from $c(v'_1)$, $c(v'_3)$, and different from two distinct  colors on vertices in  $N_G(v'_2)-\{v_2\}$. We then choose $c(v_3)$  to be a color in $\{1,\ldots,6\}$, different from $c(v_2)$ ,$c(v'_1)$, and $c(v'_2),$ and different from two distinct  colors on vertices in  $N_G(v'_3)-\{v_3\}$. Finally, considering the fact that $v'_1$ is already satisfied, we choose $c(v_1)$  to be a color in $\{1,\ldots,6\}$ and different from $c(v_2)$, $c(v_3)$, $c(v'_1)$, and $c(v'_2).$ It is easy to see that this extension provides a 3-weak-dynamic coloring of $G$, which is a contradiction.  
\end{description}
\end{proof}\qedhere

\begin{lemma}\label{lemma:edgeadjacentC_3}
The edge-minimal graph $G$ with $wd_3(G)>6$  contains no triangle with vertices $v_1,v_2,v_3$ adjacent to a triangle with vertices $v_1,v_3,v_4$ such that $d(v_2)=d(v_3)=d(v_4)=3$ and $d(v_1)\ge 4$. 
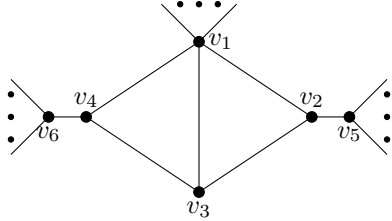
\begin{figure}[H]
\centering
\begin{tikzpicture}
\draw (0,0) -- (-1.5,-1) -- (0,-2) -- cycle -- (1.5,-1) -- (0,-2)
(-1.5,-1) -- (-2,-1) (1.5,-1) -- (2,-1)
(-2.5,-.5) -- (-2,-1) -- (-2.5,-1.5) (2.5,-.5) -- (2,-1) -- (2.5,-1.5)
(-.5,.5) -- (0,0) -- (.5,.5);
\filldraw (0,0) circle (2pt) node[anchor=west]{$v_1$};
\filldraw (1.5,-1) circle (2pt) node[anchor=south]{$v_2$};
\filldraw (0,-2) circle (2pt) node[anchor=north]{$v_3$};
\filldraw (-1.5,-1) circle (2pt) node[anchor=south]{$v_4$};
\filldraw (2,-1) circle (2pt) node[anchor=north]{$v_5$};
\filldraw (-2,-1) circle (2pt) node[anchor=north]{$v_6$};
\filldraw (-2.5,-1.3) circle (1pt);
\filldraw (-2.5,-1) circle (1pt);
\filldraw (-2.5,-.7) circle (1pt);
\filldraw (2.5,-.7) circle (1pt);
\filldraw (2.5,-1) circle (1pt);
\filldraw (2.5,-1.3) circle (1pt);
\filldraw (.25,.5) circle (1pt);
\filldraw (-.25,.5) circle (1pt);
\filldraw (0,.5) circle (1pt);
\end{tikzpicture}
\caption{Two adjacent triangles.}\label{Fig-Two adjacent triangles'}
\end{figure}
\end{lemma}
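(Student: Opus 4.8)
The plan is to reduce to a smaller graph by deleting only the shared $3$-vertex $v_3$, rather than the whole diamond, and to read off two useful inequalities for free. Set $H=G-v_3$; since $H$ has fewer edges, edge-minimality gives $wd_3(H)\le 6$, so there is a $3$-weak-dynamic coloring $c\colon V(H)\to\{1,\ldots,6\}$. In $H$ the vertex $v_2$ has exactly the two neighbors $v_1,v_5$ and $v_4$ has exactly the two neighbors $v_1,v_6$, so each is a $2$-vertex of $H$ and must see two colors; hence
\[
c(v_1)\neq c(v_5)\qquad\text{and}\qquad c(v_1)\neq c(v_6).
\]
These are exactly the relations needed later to satisfy $v_2$ and $v_4$ in $G$. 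This is the whole point of deleting $v_3$ alone: if instead one deleted all of $v_2,v_3,v_4$, then $v_1$ (adjacent to all three) would no longer be automatically satisfied, and one could not rule out $c(v_1)=c(v_5)$ or $c(v_1)=c(v_6)$, either of which makes $v_2$ or $v_4$ impossible to satisfy.

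By Lemma~\ref{lemma:AdjacentDegree2s}, $v_5$ and $v_6$ have degree at least $3$ in $G$, so each saw three colors in $H$. I would now keep the colors of $v_1,v_5,v_6$ and of every vertex outside the configuration, recolor $v_4$ and $v_2$, and finally color $v_3$. First recolor $v_4$, choosing $c(v_4)$ different from $c(v_1)$ and, so that $v_6$ keeps seeing three colors, different from at most two further colors: at most three restrictions. Then recolor $v_2$, choosing $c(v_2)$ different from $c(v_1)$, from $c(v_4)$, and from at most two further colors keeping $v_5$ satisfied: at most four restrictions. Finally color $v_3$, choosing $c(v_3)$ different from each of $c(v_1),c(v_2),c(v_4),c(v_5),c(v_6)$: at most five restrictions. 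Since at each step at most five of the six colors are forbidden, every choice can be made.

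To finish I would verify that $c$ is $3$-weak-dynamic on $G$. By construction $c(v_2),c(v_3),c(v_4)$ are pairwise distinct and all differ from $c(v_1)$, so $v_1$ sees the three colors $c(v_2),c(v_3),c(v_4)$ and $v_3$ sees the three colors $c(v_1),c(v_2),c(v_4)$. Since $c(v_3)\notin\{c(v_1),c(v_5)\}$ and $c(v_1)\neq c(v_5)$, vertex $v_2$ sees $c(v_1),c(v_3),c(v_5)$, and symmetrically $v_4$ sees $c(v_1),c(v_3),c(v_6)$. The recolorings were chosen to keep $v_5,v_6$ satisfied, and every vertex of $G$ outside $\{v_1,\ldots,v_6\}$ has the same neighborhood colors as in $H$, because the only recolored vertices $v_2,v_3,v_4$ have all their neighbors inside $\{v_1,\ldots,v_6\}$. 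This contradicts the choice of $G$.

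The main obstacle, and the key idea, is the choice of what to delete. Deleting $v_3$ alone converts the global requirement that the high-degree vertex $v_1$ see three colors into the cheap local requirement that $v_2,v_3,v_4$ get distinct colors, while extracting the two free inequalities above. The only real bookkeeping is that recoloring $v_2$ (resp.\ $v_4$) must not unsatisfy $v_5$ (resp.\ $v_6$); since $v_5$ and $v_6$ are single neighbors of the recolored vertices, this costs at most two colors each, and the degenerate coincidence $v_5=v_6$ only reorganizes these constraints without increasing their number.
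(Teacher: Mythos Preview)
Your proof is correct and follows essentially the same approach as the paper: both delete only $v_3$, use the resulting coloring of $H=G-v_3$ (which automatically gives $c(v_1)\neq c(v_5)$ and $c(v_1)\neq c(v_6)$), recolor $v_2$ and $v_4$, and then color $v_3$. The only cosmetic difference is how $v_1$ is re-satisfied after the recoloring: the paper reserves one fixed color $c_5$ on $N_H(v_1)\setminus\{v_2,v_4\}$ and keeps $c(v_2),c(v_3)$ away from it, whereas you instead force $c(v_2),c(v_3),c(v_4)$ to be pairwise distinct---both yield at most five forbidden colors at each step.
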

\begin{proof}
On the contrary suppose $G$ contains this configuration. Let $N_G(v_2)=\{v_1,v_3,v_5\}$ and $N_G(v_4)=\{v_1,v_3,v_6\}$.  Let $H=G-\{v_3\}$. Since $H$ has fewer edges than $G$, we have $wd_3(H)\leq 6$.  Therefore, there exists $c:V(H)\rightarrow \{1,\ldots,6\}$ that is a 3-weak-dynamic coloring of $H$.  To find a 3-weak-dynamic coloring of $G$, we recolor vertices $v_2$ and $v_4$ and find an appropriate color for $v_3$.

Let $c_1$ and $c_2$ be two different colors on $N_H(v_5)-\{v_2\}$, let $c_3$ and $c_4$ be two different colors on  $N_H(v_6)-\{v_4\}$, and let $c_5$ be a color on $N_H(v_1)-\{v_2,v_4\}$. 

 We first recolor $v_2$ to be a color in $\{1,\ldots,6\}-\{c(v_1),c_1,c_2,c_5\}$. Now choose $c(v_3)$ to be a color in $\{1,\ldots,6\}-\{c(v_1),c(v_2),c(v_5),c(v_6),c_5\}$. Note that the vertex $v_1$ becomes satisfied at this stage. Finally recolor $v_4$ to be a color in     $\{1,\ldots,6\}-\{c(v_1),c(v_2),c_3,c_4\}$. Since each of the vertices $v_1,v_2,v_3,v_4,v_5,$ and $v_6$ become satisfied with these assignments of colors and since $c$ satisfies all other vertices of $H$, we obtain a 3-weak-dynamic coloring of $G$.
\end {proof}

\begin{lemma}\label{lemma:C_3withDegree3Neighbors}
The edge-minimal graph $G$ does not contain a triangle with vertices $v_1,v_2,v_3$, where $d(v_1)=d(v_2)=3$ and $d(v_3)=4$ such that each of $v_1$ and $v_2$ has only one $4^+$-neighbor.
\begin{figure} [H]
\centering
\begin{tikzpicture}[xscale = 0.75, yscale = 0.75]
\foreach \x in {1,...,3}
    \filldraw (120*\x-30:1.5) circle (3pt);
\foreach \x in {1,...,3}
 \draw (120*\x-30:1.5) -- (120*\x+90:1.5);
\draw (210:1.5) -- ++(270:1.25) node(x) {}++(240:1) -- ++(60:1) -- ++(300:1);   
\draw (330:1.5) -- ++(270:1.25) node(y) {} ++(240:1) -- ++(60:1) -- ++(300:1);
\draw (330:1.5) -- ++(270:1.25) node(z) {} ++(240:1) -- ++(60:1) -- ++(300:1);
\draw (110:3.5) -- (105:2.5) node[anchor = east] {$v_6$} -- (90:1.5) -- (75:2.5) node[anchor = west] {$v_7$} -- (70:3.5);
\filldraw (94:3.35) -- (105:2.5);
\filldraw (86:3.35) -- (75:2.5);
\filldraw (105:2.5) circle (3pt);
\filldraw (75:2.5) circle (3pt);
\filldraw (x) circle (3pt) node[anchor = east] {$v_4$};
\filldraw (y) circle (3pt) node[anchor = west] {$v_5$};
\node[anchor = east] at (210:1.5) {$v_1$};
\node[anchor = west] at (330:1.5) {$v_2$};
\node[anchor = west] at (90:1.5) {$v_3$};
\end{tikzpicture}
\caption{A triangle with a vertex of degree 4.}\label{Fig-A-triangle-with-vertex-degree4}
\end{figure}
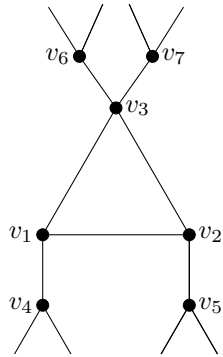
\end{lemma}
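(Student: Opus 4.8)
The plan is to follow the deletion/recoloring strategy of the previous lemmas, but to \emph{contract} the edge $v_1v_2$ rather than delete vertices, since contraction manufactures exactly the distinctness we will need. Before touching the coloring I would first pin down the degrees of the auxiliary vertices. Writing $v_4$ and $v_5$ for the third neighbors of $v_1$ and $v_2$, the hypothesis that each of $v_1,v_2$ has only one $4^+$-neighbor (namely $v_3$) forces $v_4,v_5$ to be $3^-$-vertices; since $v_1,v_2$ are themselves $3$-vertices, Lemma~\ref{lemma:AdjacentDegree2s} rules out their having a $2$-neighbor, so $d(v_4)=d(v_5)=3$. Likewise, writing $v_6,v_7$ for the remaining neighbors of $v_3$, Lemma~\ref{4-4} (no two adjacent $4^+$-vertices) gives $d(v_6),d(v_7)\le 3$. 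I would also record that $v_1,\dots,v_7$ are distinct: for instance $v_4=v_5$ would make $v_1v_2v_4$ a triangle of $3$-vertices, forbidden by Lemma~\ref{3-uniform triangle}, and the remaining coincidences are excluded similarly.

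Now contract the edge $v_1v_2$ to a single vertex $u$ and call the result $H$. The graph $H$ is planar (a minor of $G$), simple (as $v_3$ is the only common neighbor of $v_1,v_2$), and has fewer edges than $G$, so by minimality it has a $3$-weak-dynamic coloring $c$ with colors $\{1,\ldots,6\}$. The payoff of contracting is twofold: the vertex $u$ has degree $3$ in $H$ with neighbors $v_3,v_4,v_5$, so being satisfied it forces $c(v_3),c(v_4),c(v_5)$ to be pairwise distinct; and $v_3$ has degree $3$ in $H$ with neighbors $u,v_6,v_7$, so $c(u),c(v_6),c(v_7)$ are pairwise distinct, in particular $c(v_6)\neq c(v_7)$.

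I would keep $c$ on $V(G)-\{v_1,v_2\}$ and choose colors for $v_1,v_2$. Let $a,b$ be the neighbors of $v_4$ other than $v_1$, and $e,f$ those of $v_5$ other than $v_2$; since $v_4,v_5$ are satisfied in $H$ we have $c(a)\neq c(b)$ and $c(e)\neq c(f)$. The vertex $v_4$ (and symmetrically $v_5$) stays satisfied in $G$ as soon as $c(v_1)\notin\{c(a),c(b)\}$, and the distinctness of $c(v_3),c(v_4),c(v_5)$ means that $v_1$ is satisfied once $c(v_2)\notin\{c(v_3),c(v_4)\}$ and $v_2$ once $c(v_1)\notin\{c(v_3),c(v_5)\}$. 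Collecting these, $c(v_1)$ must avoid at most the four colors $\{c(a),c(b),c(v_3),c(v_5)\}$ and $c(v_2)$ at most $\{c(e),c(f),c(v_3),c(v_4)\}$, so each has at least two admissible colors.

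The one requirement not yet addressed is that the degree-$4$ vertex $v_3$ must see three colors among $c(v_1),c(v_2),c(v_6),c(v_7)$, and this ``three-of-four'' condition is the main obstacle, because it is not a pairwise-difference constraint and so does not fit the degree-choosability framework of Theorem~\ref{list} directly. My way around it is to observe that $c(v_6)\neq c(v_7)$ already supplies two colors, so it suffices to make \emph{one} of $c(v_1),c(v_2)$ avoid $\{c(v_6),c(v_7)\}$. If either admissible set above is not contained in $\{c(v_6),c(v_7)\}$, I pick that color outside and the other freely, finishing the coloring and contradicting $wd_3(G)>6$. The only surviving case is when both admissible sets are forced to equal $\{c(v_6),c(v_7)\}$; there one checks that $c(v_4)$ must coincide with $c(a)$ or $c(b)$, and I would break the deadlock by recoloring $v_4$ (or $v_5$), using the device from Lemma~\ref{3-uniform triangle} of forcing the recolored vertex to be the third color at each of its outside neighbors (so that $a,b$ stay satisfied) in order to open a color outside $\{c(v_6),c(v_7)\}$ for $v_1$ or $v_2$. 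I expect this final case, together with checking the degenerate situations where the listed vertices share neighbors, to absorb most of the work.
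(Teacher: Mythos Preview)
Your proposal is essentially the paper's own argument: both contract $v_1v_2$, use that $v_3$ has degree $3$ in $H$ to get $c(v_6)\neq c(v_7)$, set up the four-element avoidance lists for $c(v_1)$ and $c(v_2)$, and split into the easy case (one of the admissible sets meets $\{1,\dots,6\}\setminus\{c(v_6),c(v_7)\}$) versus the stuck case where both admissible sets collapse to $\{c(v_6),c(v_7)\}$, which is then broken by recoloring $v_4$, then $v_5$, and if necessary $v_3$. Your explicit use of the satisfaction of the contracted vertex $u$ to get $c(v_3),c(v_4),c(v_5)$ pairwise distinct is a clean way to phrase what the paper leaves implicit, and your anticipation that the recoloring case absorbs most of the work is accurate---the paper carries out exactly that sequence of recolorings in its Case~3.
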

\begin{proof}
On the contrary suppose, $G$ contains this configuration. Let $N_G(v_1)-\{v_2,v_3\}=\{v_4\}$,   $N_G(v_2)-\{v_1,v_3\}=\{v_5\}$, and $N_G(v_3)-\{v_1,v_2\}=\{v_6,v_7\}$. Since each of $v_1$ and $v_2$ has only one $4^+$-neighbor, Lemma \ref{lemma:AdjacentDegree2s} implies that $d_G(v_4)=d_G(v_5)=3$.  Moreover Lemma \ref{4-4} implies that  $d_G(v_6)\leq 3$ and $d_G(v_7)\leq 3$. We may suppose that $d_G(v_6)=d_G(v_7)= 3$, because degree 3 vertices provide more restrictions on the coloring.

 Contract the edge $v_1v_2$ to a single vertex $v_{1,2}$ and let $H$ be the resulting graph. Since $H$ has fewer edges than $G$, we have $wd_3(H) \le 6$. Therefore there is $c:V(H) \to \{1,...,6\}$ that is a   3-weak-dynamic coloring of $H$. We aim to reach a contradiction by using $c$ to extend the coloring of $H$ to $G$.  Let $c_1$ and $c_2$ be two distinct colors  in $c(N_H(v_4)-\{v_{1,2}\})$, and let $c_3$ and $c_4$ be two distinct colors  in $c(N_H(v_5)-\{v_{1,2}\})$. Note that $c(v_6)\neq c(v_7)$, because $v_3$ has degree 3 in $H$.

 We consider three cases.
\begin{description}
\item [Case 1:] $|\{c_1,c_2,c(v_6),c(v_7),c(v_3),c(v_5)\}| < 6$.

 In this case, we  keep the coloring of $c$ over all vertices of $V(H)-\{v_{1,2}\}$.   Choose $c(v_1)$ to be a color in $\{1,\ldots,6\}-\{c_1,c_2,c(v_6),c(v_7),c(v_3),c(v_5)\}$ that satisfies $v_2$, $v_3$, and $v_4$. Then assign $v_2$ a color in $\{1,\ldots,6\}-\{c_3,c_4,c(v_3),c(v_4)\}$ that satisfy $v_1$ and $v_5$. Therefore we obtain a 3-weak-dynamic coloring of $G$ with at most six colors.

\item [Case 2:] $|\{c_3,c_4,c(v_6),c(v_7),c(v_3),c(v_4)\}| <6$. 

 In this case, we  keep the coloring of $c$ over all vertices of $V(H)-\{v_{1,2}\}$.   Choose $c(v_2)$ to be a color in $\{1,\ldots,6\}-\{c_3,c_4,c(v_6),c(v_7),c(v_3),c(v_4)\}$, satisfying $v_1$, $v_3$, and $v_5$. Then  assign $v_1$ a color in $\{1,\ldots,6\}-\{c_1,c_2,c(v_3),c(v_5)\}$ to satisfy $v_2$ and $v_4$. Therefore we obtain a 3-weak-dynamic coloring of $G$ with at most six colors. 
 
\item [Case 3:] $\{c_1,c_2,c(v_6),c(v_7),c(v_3),c(v_5)\}=\{c_3,c_4,c(v_6),c(v_7),c(v_3),c(v_4)\}=\{1,\ldots,6\}$.

Therefore we have $\{c_1,c_2,c(v_5)\}=\{c_3,c_4,c(v_4)\}$. Since $v_4$ and $v_5$ have a common 3-neighbor in $H$, we have $c(v_4)\neq c(v_5)$. Hence we may suppose that $c(v_4)=c_1$, $c(v_5)=c_3$, and $c_2=c_4$. As a result, we may suppose that $c_1=c(v_4)=1$, $c_2=c_4=2$, $c_3=c(v_5)=3$, $c(v_3)=4$, $c(v_6)=5$, and $c(v_7)=6$.

Let $N_G(v_4)=\{v_8,v_9\}$ and let $N_G(v_5)=\{v_{10},v_{11}\}$. Let $c_7$ and $c_8$ be two distinct colors on the neighborhood of $v_8$, and let $c_9$ and $c_{10}$ be two distinct colors on the neighborhood of $v_9$.  Now recolor $v_4$ to be a color in $\{1,\ldots,6\}$ different from its current color (color 1) and different from  $\{c_7,c_8,c_9,c_{10}\}$. If the new color of $v_4$ is not $4$, then choose $c(v_2)$ to be equal to $1$ to satisfy $v_1,v_3,v_5$.  Then  assign $v_1$ a color in $\{1,\ldots,6\}-\{1,2,3,4\}$  to satisfy $v_2$ and $v_4$. Therefore we obtain a 3-weak-dynamic coloring of $G$ with at most six colors. 

Hence we may suppose we have recolored $v_4$ and the new color is $4$, i.e.\ $c(v_4)=4$. By a similar argument as above, we may also recolor $v_5$ and we can suppose that the new color on $v_5$ is $4$ too.  Now recolor $v_3$ to be a color different from 4,  different from two distinct colors in $c(N_G(v_6)-\{v_3\})$, and different from two distinct colors in $c(N_G(v_7)-\{v_3\})$. Now consider the new coloring on $v_3,v_4,$ and $v_5$. 

 If $c(v_3)\neq 3$, then let $c(v_1)=3$ and choose $c(v_2)$ to be a color in $\{1,5,6\}-\{c(v_3)\}$.  If $c(v_3)=3$, then let $c(v_1)=5$ and  $c(v_2)=1$. In the both cases, $c$ provides a 3-weak-dynamic coloring of $G$, which is a contradiction.
\end{description}
\end{proof}
\begin{lemma}\label{lemma:C_3withDegree5top}
The edge minimal graph $G$ does not contain a triangle with vertices $v_1,v_2,v_3$, such that $d(v_1)=d(v_2)=3$, $d(v_3)\geq 5$, and each of $v_1$ and $v_2$ has only one $4^+$-neighbor.
\end{lemma}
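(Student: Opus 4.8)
The plan is to mirror the proof of Lemma~\ref{lemma:C_3withDegree3Neighbors}, exploiting the fact that a $5^+$-vertex $v_3$ is easier to satisfy than a $4$-vertex. First I would pin down the local structure. Let $v_4$ be the third neighbor of $v_1$ and $v_5$ the third neighbor of $v_2$. Since $v_3$ is the unique $4^+$-neighbor of each of $v_1$ and $v_2$, Lemma~\ref{lemma:AdjacentDegree2s} forces $d(v_4)=d(v_5)=3$. Writing $N_G(v_3)=\{v_1,v_2\}\cup W$, we have $|W|=d(v_3)-2\ge 3$, and by Lemma~\ref{4-4} every vertex of $W$ has degree at most $3$.

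Next I would contract the edge $v_1v_2$ into a single vertex $v_{1,2}$, obtaining a graph $H$ with fewer edges and hence a $3$-weak-dynamic coloring $c\colon V(H)\to\{1,\dots,6\}$. As in the previous lemma, $d_H(v_{1,2})=3$ with neighbors $v_3,v_4,v_5$, so $c(v_3),c(v_4),c(v_5)$ are pairwise distinct; moreover $d_H(v_3)\ge 4$, so $v_3$ already sees three colors in $H$. Let $\{c_1,c_2\}=c(N_G(v_4)\setminus\{v_1\})$ and $\{c_3,c_4\}=c(N_G(v_5)\setminus\{v_2\})$, each a pair of distinct colors because $v_4,v_5$ are satisfied degree-$3$ vertices of $H$. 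I would keep $c$ on $V(H)\setminus\{v_{1,2}\}$ and choose colors for $v_1,v_2$ from the lists $L(v_1)=\{1,\dots,6\}\setminus\{c_1,c_2,c(v_3),c(v_5)\}$ and $L(v_2)=\{1,\dots,6\}\setminus\{c_3,c_4,c(v_3),c(v_4)\}$, each of size at least $2$. Any $c(v_1)\in L(v_1)$ satisfies $v_4$ and $v_2$, and any $c(v_2)\in L(v_2)$ satisfies $v_5$ and $v_1$ (these constraints decouple), so it only remains to make $v_3$ see a third color.

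The case split is on the colors appearing on $W$. If $|c(W)|\ge 3$, then $v_3$ is satisfied regardless of $c(v_1),c(v_2)$, and arbitrary choices from $L(v_1),L(v_2)$ finish the argument; this is the feature that makes the $5^+$ case milder than the degree-$4$ case. Otherwise $c(W)=\{d_1,d_2\}$ (it cannot be a single color, since $v_3$ sees three colors in $H$ and $W=N_H(v_3)\setminus\{v_{1,2}\}$), and I need one of $c(v_1),c(v_2)$ to lie outside $\{d_1,d_2\}$, so that $v_3$ sees $\{d_1,d_2\}$ together with a third color. This succeeds whenever $L(v_1)\not\subseteq\{d_1,d_2\}$ or $L(v_2)\not\subseteq\{d_1,d_2\}$.

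The main obstacle is the remaining deadlock $L(v_1)=L(v_2)=\{d_1,d_2\}$, the analogue of Case~3 of Lemma~\ref{lemma:C_3withDegree3Neighbors}. Here $\{c_1,c_2,c(v_3),c(v_5)\}$ and $\{c_3,c_4,c(v_3),c(v_4)\}$ each equal the four-element set $\{1,\dots,6\}\setminus\{d_1,d_2\}$, so in particular $c(v_4)\notin\{d_1,d_2\}$ and $c(v_4)\in\{c_1,c_2\}$. To break the deadlock I would recolor a single vertex of degree at most $3$ so as to introduce a third color into $v_3$'s neighborhood while preserving the rest of the coloring: either recolor some $w\in W$ that carries a repeated color to a value outside $\{d_1,d_2\}$, or recolor $v_4$ to $d_1$, which shifts $L(v_2)$ to $\{d_2,\,c(v_4)^{\mathrm{old}}\}$ and lets me take $c(v_2)=c(v_4)^{\mathrm{old}}\notin\{d_1,d_2\}$ (while $c(v_1)\in\{d_1,d_2\}$ still satisfies $v_2$ and $v_4$). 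The delicate point, exactly as in the previous lemma, is to verify that the recolored vertex's own neighbors remain satisfied; since each such neighbor has degree at most $3$ by Lemmas~\ref{lemma:AdjacentDegree2s} and \ref{4-4}, it imposes at most two forbidden colors, and a short secondary case analysis (choosing $d_1$ versus $d_2$, or recoloring $v_5$ or a different $w$ instead) produces a valid color and completes the contradiction.
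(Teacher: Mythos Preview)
Your approach differs from the paper's in the choice of reduction: you contract $v_1v_2$ (mirroring Lemma~\ref{lemma:C_3withDegree3Neighbors}), whereas the paper simply deletes $v_1$ and $v_2$. That single change is what makes the paper's argument short and your argument laborious. With $H=G-\{v_1,v_2\}$ we have $d_H(v_3)=d_G(v_3)-2\ge 3$, so the $3$-weak-dynamic coloring of $H$ already forces $|c(W)|\ge 3$; the vertex $v_3$ is satisfied from the outset and never needs attention. The paper then just recolors $v_4,v_5$ (five restrictions each: four from their two surviving neighbors plus the single constraint $c(v_4),c(v_5)\ne c(v_3)$) and picks $c(v_1),c(v_2)$ with four restrictions each. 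There is no deadlock case at all. By contracting instead, you keep $v_{1,2}$ in $N_H(v_3)$, so the guarantee you extract is only $|c(W)|\ge 2$, and you are forced into the analogue of Case~3 of Lemma~\ref{lemma:C_3withDegree3Neighbors}, which is precisely the complication the hypothesis $d(v_3)\ge 5$ is meant to let you avoid.

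As written, your deadlock resolution is a gap rather than a proof. You propose recoloring some $w\in W$ with a repeated color to a value outside $\{d_1,d_2\}$, or recoloring $v_4$ to $d_1$; but each such vertex has up to four external restrictions (two from each of its other neighbors), and you also need the new color to land in a prescribed two-element set (either $\{d_1,d_2\}$ for $v_4$, or its complement for $w$). That is six constraints against six colors, and you have not shown that at least one of the several candidate vertices admits a legal recoloring. The phrase ``a short secondary case analysis \dots\ produces a valid color'' is an assertion, not an argument; in Lemma~\ref{lemma:C_3withDegree3Neighbors} the corresponding Case~3 takes real work and exploits specific equalities among $c_1,\dots,c_4,c(v_4),c(v_5)$ that you have not established here. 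The fix is to abandon contraction and delete $\{v_1,v_2\}$ as the paper does; then the whole issue disappears.
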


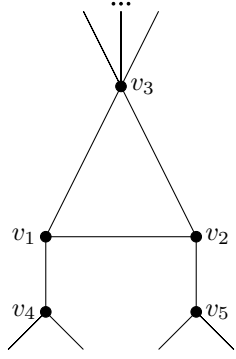
\begin{figure} 
\centering
\begin{tikzpicture}
\draw (-1,0)--(0,2)--(1,0)--(-1,0);
\draw (-1,0)--(-1,-1)--(-1.5,-1.5)--(-1,-1)--(-.5,-1.5);
\draw (1,0)--(1,-1)--(1.5,-1.5)--(1,-1)--(.5,-1.5);
\draw (0,2)--(-.5,3)--(0,2)--(0,3)--(0,2)--(.5,3);
\filldraw (-1,0) circle (2pt)  node[anchor=east]{$v_1$};
\filldraw (1,0) circle (2pt) node[anchor=west]{$v_2$};
\filldraw (0,2) circle (2pt) node[anchor=west]{$v_3$};
\filldraw (-1,-1) circle (2pt) node[anchor=east]{$v_4$};
\filldraw (1,-1) circle (2pt) node[anchor=west]{$v_5$};
\filldraw (0,3.1) circle (.5pt);
\filldraw (.1,3.1) circle (.5pt);
\filldraw (-.1,3.1) circle (.5pt);
\end{tikzpicture}
\caption{A triangle with a vertex of degree at least 5.}\label{Fig-A-triangle-with-vertex-degree5}
\end{figure}

\begin{proof}
On the contrary suppose $G$ contains this configuration. Let $H=G-\{v_1,v_2\}$. Since $H$ has fewer edges than $G$, we have $wd_3(H) \le 6$. Therefore there exists $c: V(H) \to \{1,...,6\}$ that is a 3-weak-dynamic coloring of $H$. Let $N_G(v_1)=\{v_2,v_3,v_4\}$ and $N_G(v_2)=\{v_1,v_3,v_5\}$. Since $d_G(v_4)\leq 3$ and $d_G(v_5)\leq 3$, Lemma \ref{lemma:AdjacentDegree2s} implies that $d(v_4)=d(v_5)=3$.  Fix the coloring $c$ over the vertices $V(G)-\{v_1,v_2,v_4,v_5\}$. We recolor $v_4$ and $v_5$ and then find appropriate colors for $v_1$ and $v_2$ to obtain a 3-weak-dynamic coloring of $G$.   Note that $v_3$ was satisfied by the coloring of $H$ since $d_H(v_3) \ge 3$. Therefore, when we color $v_1$ and $v_2$,  the neighbors of $v_3$ do not create any dependencies for them. 

We begin by recoloring $v_4$ and $v_5$. We have $d(v_4)=d(v_5)=3$ and therefore, by the coloring of $H$, we know that $v_4$ must avoid two colors from the neighborhood of each vertex in $N(v_4)-\{v_1\}$. Additionally $v_4$ must avoid $c(v_3)$. Therefore we have only  five dependencies on $v_4$ and we are able to choose an appropriate color for $v_4$ in $\{1,\ldots,6\}$.  Similarly we have that $v_5$ must avoid at most five colors. Therefore we can recolor $v_5$ as well.

 Now choose $c(v_1)$  to be a color in $\{1,\ldots,6\}$, different from $c(v_3)$ and $c(v_5)$, and also different from the colors of the two vertices in $N_G(v_4)-\{v_1\}$.   Finally choose $c(v_2)$  to be a color in $\{1,\ldots,6\}$, different from $c(v_3)$  and $c(v_4)$, and also different from the colors of the two vertices in $N_G(v_5)-\{v_2\}$.  This new coloring is a 3-weak-dynamic coloring of $G$, a contradiction.
\end{proof}

%

\begin{lemma}\label{lemma:3-Regulark-cycle-with-all-3-neighbors}
The edge-minimal graph $G$ with $wd_3(G)>6$ contains no cycle $C$ with vertices $v_1,\ldots,v_k$ such that $d(v_1)=\ldots=d(v_k)=3$, and
\begin{enumerate}
\item when $k$ is odd, a vertex in $\{v_1,\ldots,v_k\}$ has no $4^+$-neighbor, and
\item  when $k$ is even, a vertex in $\{v_1,v_3,\ldots,v_{k-1}\}$ and a vertex in $\{v_2,v_4,\ldots,v_k\}$ both have no $4^+$-neighbor.
\end{enumerate} 
\end{lemma}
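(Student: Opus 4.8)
The plan is to delete the entire cycle, recolor the smaller graph by minimality, and then re-extend the coloring to $C$ by viewing the extension as a list-coloring problem on an auxiliary graph built from the distance-two adjacencies of $C$.

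Concretely, set $H = G - \{v_1,\ldots,v_k\}$; since $H$ has fewer edges than $G$, there is a 3-weak-dynamic coloring $c \colon V(H) \to \{1,\ldots,6\}$, and it remains to choose $c(v_1),\ldots,c(v_k)$ so that every vertex of $G$ is satisfied. For each $i$ let $v_i'$ be the unique neighbor of $v_i$ off $C$ (as $d(v_i)=3$, the other two neighbors are the cycle-neighbors $v_{i-1},v_{i+1}$). The key structural point is that the color placed on $v_i$ is irrelevant to whether $v_i$ is satisfied: satisfaction of $v_i$ depends only on the three colors $c(v_{i-1}),c(v_{i+1}),c(v_i')$. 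Hence the only interactions among the colors we are choosing are the distance-two requirements $c(v_i)\neq c(v_{i\pm 2})$ forced by the satisfaction of $v_{i-1}$ and $v_{i+1}$, together with the requirement — active only when $v_i$ has no $4^+$-neighbor, so that $d(v_i')=3$ by Lemma \ref{lemma:AdjacentDegree2s} — that $c(v_i)$ avoid the two colors already present at $N(v_i')\setminus\{v_i\}$ so as to satisfy $v_i'$. (These two colors are distinct because $v_i'$, having degree $2$ in $H$, is satisfied by $c$.)

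I would therefore form the auxiliary graph $D$ on $V(C)$ with edges $\{v_iv_{i+2}\}$ and attach to each $v_i$ the list $L(v_i)=\{1,\ldots,6\}$ with its forbidden colors removed. This gives $|L(v_i)|\geq 4$ when $v_i$ has a $4^+$-neighbor and $|L(v_i)|\geq 2$ otherwise, so $|L(v_i)|\geq d_D(v_i)$ in every case, and any proper $L$-coloring of $D$ produces the desired 3-weak-dynamic coloring of $G$. The point of the distance-two graph is its clean shape: $D$ is the single odd cycle $C_k$ when $k$ is odd and a disjoint union of two copies of $C_{k/2}$ when $k$ is even (the degenerate ends being $D=K_3$ for $k=3$ and $D=2K_2$ for $k=4$).

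It then suffices to prove $D$ is $L$-choosable, which I would do component by component. A component containing a vertex with a $4^+$-neighbor has a list strictly larger than its $D$-degree there, so Corollary \ref{cor-list} finishes it; an even-cycle component is degree-choosable straight from Theorem \ref{list}, since an even cycle is a block that is neither an odd cycle nor complete; and the small degenerate cases reduce to Proposition \ref{complete-graph} or Corollary \ref{cor-list}. The real obstacle — where I expect the main effort to go — is a component that is an odd cycle all of whose lists have size exactly $2$. Here Theorem \ref{list} does not apply, and Proposition \ref{odd-cycle} can only be invoked once two $D$-adjacent lists are made to differ. This is precisely the purpose of the degree hypotheses (a vertex with no $4^+$-neighbor when $k$ is odd, and one in each parity class — hence one in each component of $D$ — when $k$ is even): the off-cycle neighbor $v_m'$ of such a vertex has degree $3$, and recoloring it lets me perturb the forbidden colors feeding the nearby lists and force an adjacent pair of lists apart, all while keeping $v_m'$ and its neighbors satisfied. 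Checking that this perturbation can always be carried out consistently — and dispatching the degenerate configurations where $C$ has a chord or two cycle-vertices share an off-cycle neighbor, which merely add edges to $D$ or are already forbidden by Lemmas \ref{lemma:AdjacentDegree2s}--\ref{3-uniform triangle} — is the crux of the argument.
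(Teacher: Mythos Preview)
Your proposal follows essentially the same approach as the paper: delete the cycle, color $H$ by minimality, and recast the extension to $C$ as a list-coloring problem on the distance-two dependency graph $D$, invoking Theorem~\ref{list}/Corollary~\ref{cor-list} when possible and, in the stubborn odd-cycle-with-size-$2$-lists case, recoloring the degree-$3$ off-cycle neighbor $v_m'$ guaranteed by the hypothesis to perturb the lists and feed Proposition~\ref{odd-cycle}. The paper makes two simplifying moves you only gesture at: it takes $C$ shortest at the outset (so $C$ has no chord and, crucially, the chosen $v_m'$ appears only once among the $v_i'$, which is what guarantees the recoloring of $v_m'$ alters exactly the two lists $L(v_{m-1}),L(v_{m+1})$ and hence breaks the all-lists-equal situation against an unchanged neighbor), and it does an explicit case split on the multiplicity of each $v_i'$ in the multiset $\{v_1',\ldots,v_k'\}$ to build $D$ and $L$ correctly when off-cycle neighbors coincide --- but these are precisely the ``degenerate configurations'' you flag as the crux.
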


\begin{proof}
On the contrary, suppose $G$ contains such a configuration $C$. We may choose $C$ to be the shortest such configuration.  Hence $C$ has no chord. For each $i$, let $v'_i$ be the neighbor of $v_i$ outside $C$. Note  $v'_1,\ldots, v'_k$  are not necessarily  distinct vertices, but they are distinct from $v_1,\ldots,v_k$ because $C$ has no chord. Let $H=G-\{v_1,\ldots,v_k\}$.  Since $H$ has fewer edges than $G$, we have $wd_3(H)\leq6$.  Thus  there exists $c:V(H)\rightarrow\{1,\ldots,6\}$ that is a 3-weak-dynamic coloring of $H$.  To obtain a contradiction, we use $c$ to find a 3-weak-dynamic coloring of $G$.

By Lemma \ref{lemma:AdjacentDegree2s} all the vertices $v'_1,\ldots, v'_k$ have degree at least 3 in $G$. By the structure of $C$, not all vertices in $\{v'_1,\ldots,v'_{k}\}$ have degree at least 4. Hence  we may suppose that when $k$ is odd, $d(v'_1)=3$, and when $k$ is even, $d(v'_1)=d(v'_2)=3$. The proof of the remaining cases is very similar.


Let $S=\{v_1,\ldots,v_k\}$. We aim to extend the coloring $c$ to a 3-weak-dynamic coloring of $G$ by choosing appropriate colors for the vertices in $S$. Now we study the restrictions we must consider  for the coloring on $S$ to make sure that a 3-weak-dynamic coloring of $G$ is obtained.  Let $i\in\{1,\ldots,k\}$. If $v'_i$ appears only once in the multiset $\{v'_1,\ldots,v'_k\}$, then we choose   $c(v_i)$ to be different from $c(v_{i+2}),c(v_{i-2}),c(v_{i+1}'),c(v_{i-1}')$ as well as at most two distinct colors in $N_H(v'_i)$.

  If $v'_i$ appears twice in the multiset $\{v'_1,\ldots,v'_k\}$, then in $G$ the vertex $v'_i$ is adjacent to two vertices of $C$. As a result we choose the color of $v_i$ to be different from a color in $N_H(v'_i)$ and different from $c(v_{i+2}),c(v_{i-2}),c(v_{i+1}'),c(v_{i-1}')$, and different from the color of an additional  vertex in $C$ (the vertex $v_j$ such that $v'_i=v'_j$).

For any vertex $x$ that appears at least three times in the multiset $\{v'_1,\ldots,v'_k\}$, choose $S_x$ to consist of three indices $j_1,j_2,j_3$ such that $x=v'_{j_1}=v'_{j_2}=v'_{j_3}$. Then if we choose the colors of the vertices $v_{j_1},v_{j_2},v_{j_3}$ to be different, the vertex $x$ becomes satisfied in $G$. Therefore  if $v'_i$ appears three or more  times in the multiset $\{v'_1,\ldots,v'_k\}$, then we choose  the color of $v_i$   to be different from  $c(v_{i+1}),c(v_{i-2}),c(v'_{i+1}),c(v'_{i-1})$ and moreover if $i\in S_{v'_i} $ choose $c(v_i)$ to be also different from the color  of two other vertices in $C$ (the two vertices other than $v_i$ whose indices belong to $S_{v'_i}$). Note that by the way we aim to choose colors for the vertices $v_1,\ldots,v_k$, if this extension exists, all the vertices $v_1,\ldots,v_k,v'_1,\ldots,v'_k$ become satisfied.

Now we form a graph $D$ that represents the dependencies among the vertices of $S$.  The graph $D$ has vertex set $S$, and two vertices of $S$ are adjacent in $D$ if we require their colors to be different.
For each vertex $w$ in $S$, let $R(w)$ be the set of those colors we need to avoid for $c(w)$ that come from vertices outside of  $S$.   Define $L(w)=\{1,\ldots,6\}-R(w)$. By the above argument each vertex of $S$ has at most six restrictions, hence  $|L(w)|$ is at least the degree of $w$ in $D$ for all $w\in S$. It is enough to show that $D$ is $L$-choosable, because then the coloring of vertices of $D$ can be used on the corresponding vertices in $G$ to extend $c$ to a 3-weak-dynamic coloring of $G$. 

In $D$ each vertex $v_i$ is adjacent to $v_{i-2}$, $v_{i+2}$. When $v'_i$ appears more than once in the multiset $\{v'_1,\ldots,v'_k\}$, the vertex $v_i$ might have other neighbors in $D$ as well. As a result when $k$  is odd, $D$ has one component which is Hamiltonian, and when $k$ is even, $D$ has  at most two components. 



 By Lemma \ref{3-uniform triangle}, we have $k\neq 3$.   When $k=4$ each of the vertices $v_1,\ldots,v_4$ has at most five restrictions, which makes their lists larger than their degrees.  By Corollary~\ref{cor-list}, $D$ is $L$-choosable in this case. Hence suppose $k\geq 5$. 

First suppose that $D$ is 2-connected.  If $D$ is not a complete graph, an odd cycle, if $D$  has a vertex $u$ with $|L(u)|>d_D(u)$, or if not all vertices of $D$ have the same lists, then by Theorem~\ref{list}, Corollary~\ref{cor-list}, Proposition~\ref{complete-graph}, and Proposition~\ref{odd-cycle} the graph $D$ is $L$-choosable, as desired. Hence suppose $D$ is an odd cycle or a complete graph,  all its lists are the same, and have size equal to the degrees of the vertices in $D$. Recall that vertex $v'_1$ has degree 3 in $G$. Thus the degree of $v'_1$ in $H$ is at most $2$. Therefore we can recolor $v'_1$ in $H$ by another color in such a way that the coloring on $H$ stays 3-weak-dynamic. Let $c^*$ be the new 3-weak-dynamic coloring of $H$. Now repeat the above argument over the coloring $c^*$ of $H$. 

Since $|L(v_i)|=d_D(v_i)$ for all $i$, we have $v'_{i+1}\neq v'_{i+1}$ (otherwise $v_i$ has at most five restrictions).  Moreover the choice of $C$ and Lemma \ref{3-uniform triangle} imply  that $v'_1$ appears at most once in the multiset $\{v'_1,\ldots,v'_k\}$. Hence by moving from the coloring $c$ to the coloring $c^*$, the lists of the vertices $v_2$ and $v_k$ change to another list, while the lists on other vertices stay as before. Therefore not all the lists are the same now. As a result, by Corollary  \ref{cor-list} and Propositions \ref{complete-graph} and \ref{odd-cycle}, the graph $D$ is $L$-choosable, as desired.

Recall that when $k$ is odd, $D$ is Hamiltonian. Hence for the case that $k$ is odd, or $k$ is even but $D$ is 2-connected, the above argument shows that $D$ is $L$-choosable. Now suppose that $k$ is even and $D$ is not 2-connected. The graph $D$ contains at most two components. 

If $D$ has exactly two components $C_1$ and $C_2$, then vertices $v_1$ and $v_2$ belong to different components of $D$, because we know that $v_1v_3\ldots v_{k-1}v_1$ and $v_2v_4\ldots v_kv_2$ are cycles in $D$. Moreover each of the components is 2-connected, because they are Himiltonian. Since $v'_1$ and $v'_2$ have degree at most $2$ in $H$, a similar argument as the one we applied above can be applied here independently for $C_1$ and $C_2$ to extend the coloring $c$ (and change it if necessary) to a 3-weak-dynamic coloring of $G$. 

 Hence suppose $D$ is connected, but is not 2-connected. Therefore $D$ has two blocks, one with vertices of odd indices, say $B_1$, and one with vertices of even indices, say $B_2$. Therefore $D$ has a cut-vertex $v$. We may suppose that $v$ belongs to $B_1$.

Now choose colors for vertices of $B_2$ from their lists in such a way that a proper coloring for $B_2$ is obtained. This is possible because all vertices of $B_2$ have lists of size at least their degrees and at least one vertex of $B_2$ (the neighbor(s) of $v$ in $B_2$) has a list of size one more than its degree in $B_2$. Note that $v$ is the only vertex of $B_1$ that has a neighbor in $B_2$, since otherwise $v$ cannot be a cut-vertex of $D$. Now redefine $L(v)$ by removing from it the colors that are already picked for the neighbor(s) of $v$ in $B_2$.  Now consider the new list assignment $L$ over the vertices of $B_1$. Each vertex has a list of size at least its degree in $B_1$, and $B_1$ is 2-connected. If $B_1$ is not a complete graph or odd cycle (Theorem \ref{list}),  if $B_1$ is a complete graph or odd cycle but the lists on its vertices are not identical (Corollary \ref{cor-list}), or if  $B_1$ is a complete graph or odd cycle but it has a vertex $u$ with $|L(u)|>d_{B_1}(u)$ (Propositions \ref{complete-graph} and \ref{odd-cycle}), then $B_1$ is $L$-choosable, as desired.

Hence suppose $B_1$ is a complete graph or odd cycle, and the lists on the vertices of $B_1$ are identical and have size equal to the degrees of vertices in $B_1$.  Recall that we supposed $d_G(v'_2)=3$. Hence in $H$ the vertex $v'_2$ has degree at most 2. Therefore we can recolor this vertex using a color in $\{1,\ldots,6\}$ by a different color in such a way that the new coloring $c^*$ is still a 3-weak-dynamic coloring of $H$. Now repeat the same process as above on  defining a list $L'$ on the vertices of $D$, but using coloring $c^*$ in place of color $c$.

The vertex $v'_2$ appears only once in the multiset $\{v'_1,\ldots,v'_k\}$, because if $v'_2=v'_4$ or $v'_2=v'_k$, then the vertex $v_3$ or the vertex $v_{k-1}$ have lists of size larger than their degrees in $D$, which is not accepted. If $v'_2=v'_j$ for some $j\not\in \{4,k-1\}$, then a configuration smaller than $C$ exists in $G$, which is also not accepted by the choice of $C$.

Note that the only difference between colorings $c$ and $c^*$ is on the color of vertex $v'_2$. By the argument in the above paragraph, only the list of vertices $v_1$ and $v_3$ are affected by the color of the vertex $v'_2$.  Hence the only difference between $L$ and $L'$ is on the lists of vertices $v_1$ and $v_3$. Therefore the vertices of $B_2$ get the same colors as before, because for these vertices $L$ and $L'$ are the same. Now redefine $L'(v)$ by removing from it the color of  neighbors of $v$ in $B_2$. Now we try to color the vertices of $B_1$ using the list assignment $L'$.  But exactly two vertices of $B_1$ (the vertices $v_1$ and $v_3$) have different lists than before. Moreover $k\geq 5$ implies that $B_1$ has at least three vertices. Therefore not all lists on the vertices of $B_1$ are now the same. Hence by Corollary \ref{cor-list}, Proposition \ref{complete-graph}, and Proposition \ref{odd-cycle},   $B_1$ is $L'$-choosable, as desired.
\end{proof}


\begin{lemma}\label{lemma:3-RegularkCycle-remaining-cases}
The edge-minimal graph $G$ with $wd_3(G)>6$ contains no cycle $C$ with vertices $v_1,\ldots,v_{k}$ such that $d(v_1)=\ldots=d(v_k)=3$.
\end{lemma}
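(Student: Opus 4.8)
The plan is to treat Lemma~\ref{lemma:3-RegularkCycle-remaining-cases} as the complement of Lemma~\ref{lemma:3-Regulark-cycle-with-all-3-neighbors}: once a shortest cycle $C=v_1\cdots v_k$ of $3$-vertices is fixed, the hypotheses of Lemma~\ref{lemma:3-Regulark-cycle-with-all-3-neighbors} must \emph{fail}, and I would argue that the surviving cases are precisely those in which (almost) every off-cycle neighbor has degree at least $4$, which is the \emph{easy} regime because a $4^+$-neighbor is automatically satisfied. First I would choose $C$ to be shortest, so it has no chord, let $v'_i$ be the neighbor of $v_i$ off $C$ (each of degree at least $3$ by Lemma~\ref{lemma:AdjacentDegree2s}), put $H=G-\{v_1,\dots,v_k\}$, and fix a $3$-weak-dynamic $6$-coloring $c$ of $H$. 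Negating the hypotheses of Lemma~\ref{lemma:3-Regulark-cycle-with-all-3-neighbors} (and using Lemma~\ref{3-uniform triangle} to rule out $k=3$) leaves exactly two situations: when $k$ is odd, every $v'_i$ has degree at least $4$; and when $k$ is even, after relabeling every odd-indexed $v'_i$ has degree at least $4$.

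Next I would reuse the dependency-graph machinery of Lemma~\ref{lemma:3-Regulark-cycle-with-all-3-neighbors}: form $D$ on $S=\{v_1,\dots,v_k\}$ with $v_i$ adjacent to $v_{i-2},v_{i+2}$, and set $L(v_i)=\{1,\dots,6\}\setminus R(v_i)$ where $R(v_i)$ collects the colors forced from outside $S$. The key quantitative observation is that a $v'_i$ of degree at least $4$ has $d_H(v'_i)\ge 3$, hence is already satisfied and contributes \emph{no} restriction; then $R(v_i)=\{c(v'_{i-1}),c(v'_{i+1})\}$ and $|L(v_i)|\ge 4>2=d_D(v_i)$, whereas a degree-$3$ external adds at most two further restrictions, still giving $|L(v_i)|\ge 2=d_D(v_i)$. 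In the odd case $D$ is a single Hamiltonian (hence connected) cycle in which every list strictly exceeds the degree, so Corollary~\ref{cor-list} directly yields an $L$-coloring, which extends $c$ to a $3$-weak-dynamic coloring of $G$. In the even case $D$ splits into the odd-indexed part $O$ and the even-indexed part $E$; the component $O$ has all lists of size at least $4$ (its externals are the high-degree ones), so Corollary~\ref{cor-list} colors $O$, and this already satisfies every even-indexed cycle vertex. If the even externals are also all of degree at least $4$, the same argument colors $E$; otherwise $E$ is a cycle in $D$ with lists of size at least its degree, and I would run the Lemma~\ref{lemma:3-Regulark-cycle-with-all-3-neighbors} argument on $E$ alone, invoking Theorem~\ref{list}, Corollary~\ref{cor-list}, and Propositions~\ref{complete-graph} and~\ref{odd-cycle} unless $E$ is an odd cycle or complete graph carrying identical size-$2$ lists.

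The hard part will be exactly that last subcase of the even regime: $E$ an odd cycle or complete graph whose lists are all identical of size $2$. In Lemma~\ref{lemma:3-Regulark-cycle-with-all-3-neighbors} the uniformity of the lists was broken by recoloring a degree-$3$ external of one of the relevant cycle vertices, but here the colors restricting $E$ come from the odd-indexed externals (degree at least $4$, not freely recolorable) together with the $H$-neighborhoods of the degree-$3$ even externals, so I expect to need a more careful perturbation. The intended resolution is to recolor an $H$-neighbor of a degree-$3$ even external (whose degree is controlled by Lemma~\ref{4-4}, since a $4^+$-vertex has no $4^+$-neighbor) or a suitable degree-$3$ external itself, obtaining a new $3$-weak-dynamic coloring $c^{*}$ of $H$ that alters the list of at least one, but not all, vertices of $E$; Corollary~\ref{cor-list} and Propositions~\ref{complete-graph},~\ref{odd-cycle} then finish. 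Shared off-cycle neighbors (repetitions in the multiset $\{v'_1,\dots,v'_k\}$), which add extra edges to $D$, would be absorbed into this same $L$-choosability analysis exactly as in Lemma~\ref{lemma:3-Regulark-cycle-with-all-3-neighbors}, with the shortest-cycle choice preventing the short coincidences that would otherwise create chords or smaller configurations.
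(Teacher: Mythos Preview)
Your overall structure is sound, and the reduction to the dependency graph $D$ with lists $L$ is exactly the right framework. Two points deserve attention.

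First, a simplification you are missing: Lemma~\ref{lemma:4-3-3-4Configuration} forbids two consecutive off-cycle neighbours $v'_i,v'_{i+1}$ from both having degree at least $4$ (take the path $v'_i v_i v_{i+1} v'_{i+1}$ together with $v_{i-1},v_{i+2}$). Combined with Lemma~\ref{lemma:3-Regulark-cycle-with-all-3-neighbors}, this forces $k$ to be even and the $4^+$-externals and degree-$3$ externals to alternate strictly. Your odd-$k$ case and your ``all even externals also have degree $\ge 4$'' subcase are therefore vacuous; handling them is harmless but shows you are not yet using the available structure.

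Second, and this is the genuine gap: your proposed resolution of the hard subcase does not work as stated. When $E$ is an odd cycle with identical lists, the restriction set of each even-indexed $v_j$ is $R(v_j)=\{c(v'_{j-1}),c(v'_{j+1})\}\cup c(N_H(v'_j))$. The first two colours are carried by \emph{odd} externals of degree at least $4$, which cannot in general be recoloured within $\{1,\dots,6\}$ while keeping the coloring of $H$ valid. Recolouring an even external $v'_j$ changes only $R(v_{j-1})$ and $R(v_{j+1})$, which lie in $O$, not in $E$. That leaves the $H$-neighbours of $v'_j$, but you give no argument bounding their degree in $H$ (Lemma~\ref{4-4} limits the degrees of \emph{their} neighbours, not their own), so the ``at most five constraints, hence a spare colour'' count can fail; and even when a recolouring exists you have not argued that it changes $L(v_j)$ without simultaneously moving every other list to the same new set.

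The paper sidesteps this entirely with a different device: before colouring, it identifies $v'_1$ and $v'_3$ in $H$ (this preserves planarity), colours the resulting graph $H'$, and pulls the colouring back to $H$. This forces $c(v'_1)=c(v'_3)$, so $R(v_2)$ has at most three elements and $|L(v_2)|\ge 3>2=d_D(v_2)$. Now each component of $D$ contains a vertex with list strictly larger than its degree ($v_2$ in the even-indexed component, any $v_i$ with $i\in\{5,7,\dots,k-1\}$ in the odd-indexed component), and Corollary~\ref{cor-list} finishes immediately, with no recolouring step needed.
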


\begin{proof}
On the contrary suppose $G$ contains such a configuration $C$. We may choose $C$ to be the shortest cycle in $G$ that forms this configuration. Therefore $C$ has no chord.  For each $i$, let $v'_i$ be the neighbor of $v_i$ outside $C$. Hence, while  $v'_1,\ldots v'_k$  are not necessarily  distinct vertices, by the choice of $C$ they are distinct from $v_1,\ldots,v_k$.   By Lemmas \ref{3-uniform triangle},  \ref{lemma:C_3withDegree5top}, and  \ref{lemma:3-Regulark-cycle-with-all-3-neighbors}, we have $v'_i\neq v'_{i+1}$ for all $i$. By Lemma \ref{lemma:4-3-3-4Configuration}, $d(v'_i)\geq 4$ and $d(v'_{i+1})\geq 4$ do not simultaneously happen for all $i$. Therefore by Lemma \ref{lemma:3-Regulark-cycle-with-all-3-neighbors},  $k$ is even. Moreover by Lemma \ref{lemma:3-Regulark-cycle-with-all-3-neighbors}, all vertices in $\{v'_1,v'_3,\ldots,v'_{k-1}\}$ or all vertices in $\{v'_2,v'_4,\ldots,v'_{k}\}$ have degree at least 4 in $G$. By symmetry, suppose all vertices in $\{v'_1,v'_3,\ldots,v'_{k-1}\}$ have degree at least 4 in $G$. As a result by Lemmas \ref{lemma:AdjacentDegree2s} and \ref{lemma:4-3-3-4Configuration},  all vertices in $\{v'_2,v'_4,\ldots,v'_{k}\}$ have degree 3 in $G$.

 Let $H=G-\{v_1,\ldots,v_k\}$.  Let $H'$ be the graph obtained from $H$ by identifying vertices $v'_1$ and $v'_{3}$ in $H$ into a single vertex $v'_{1,3}$. Note that $H'$ is still planar and has fewer edges than $G$. Therefore we have $wd_3(H')\leq6$.  Thus there exists $c:V(H')\rightarrow\{1,\ldots,6\}$ that is a 3-weak-dynamic coloring of $H$. Now give each vertex $v$ in $H$ the color its corresponding vertex in $H'$ has. Also give vertices $v'_1$ and $v'_3$ in $H$ the color of the vertex $v'_{1,3}$ in $H'$. In the current coloring of $H$ all the vertices of $H$ are satisfied (with respect to 3-weak-dynamic coloring property) except for possibly vertices $v'_1$ and $v'_3$. 
 
 If $v'_1$ sees only one color on its neighborhood in $H$, then choose a neighbor $x$ of $v'_1$ (which we know has degree at most 3 by Lemma \ref{lemma:AdjacentDegree2s}). We can recolor $x$ by   a different color in $\{1,\ldots,6\}$ in such a way that its neighbors in $N_H(x)-\{v'_1,v'_3\}$ stay satisfied. Similarly, we can recolor a neighbor of $v'_3$ in $H$, when $v'_3$ sees only one color on its neighborhood in $H$. Let $c^*$ be the resulting coloring on $H$.   We extend $c^*$ to a 3-weak-dynamic coloring of $G$ by finding  appropriate colors for $v_1,\ldots,v_k$.  We will call the set of vertices that we want to color $S$.  Thus, $S=\{v_1,\ldots,v_k\}$.  Now we study the restrictions we must consider  for the coloring on $S$ to make sure that a 3-weak-dynamic coloring of $G$ is obtained.

For each odd $i$ with $i\not\in \{1,3\}$, if $v'_i$ appears only once in the multiset $\{v'_1,\ldots,v'_k\}$, then $v'_i$ is already satisfied in $H$. Therefore it is enough to choose  $c(v_i)$ to be different from $c(v_{i+2})$, $c(v_{i-2})$, $c(v_{i+1}')$, and $c(v_{i-1}')$. For such an $i$, if $v'_i$ appears twice in $\{v'_1,\ldots,v'_k\}$, then we choose $c(v_i)$ to be different from $c(v_{i+2})$, $c(v_{i-2})$, $c(v_{i+1}')$, $c(v_{i-1}')$, and different form two colors in $N_H(v'_i)$.

For any vertex $x$ that appears at least three times in the multiset $\{v'_1,\ldots,v'_k\}$, choose $S_x$ to be a set containing three indices $j_1,j_2,j_3$ such that $x=v'_{j_1}=v'_{j_2}=v'_{j_3}$. Thus if we choose the colors of the vertices $v_{j_1},v_{j_2},v_{j_3}$ to be different, the vertex $x$ becomes satisfied in $G$. Therefore, for the case that $i$ is odd and $i\not\in\{1,3\}$,  if $v'_i$ appears three or more  times in the multiset $\{v'_1,\ldots,v'_k\}$, then we choose  the color of $v_i$   to be different from  $c(v_{i+2})$, $c(v_{i-2})$, $c(v'_{i+1})$, and $c(v'_{i-1})$.
If moreover $i\in S_{v'_i}$, then choose $c(v_i)$ to be different from $c(v_{i+2})$, $c(v_{i-2})$, $c(v'_{i+1})$, and $c(v'_{i-1})$ and different from the color  of two other vertices in $C$ (the two vertices other than $v_i$ whose indices belong to $S_{v'_i}$). 

Now suppose $i\in\{1,3\}$. Note that the vertices $v'_1$ and $v'_3$ might not be satisfied in $H$.  If $v'_i$ appears only once in $\{v'_1,\ldots,v'_k\}$, then choose  $c(v_i)$ to be different from $c(v_{i+2})$, $c(v_{i-2})$, $c(v_{i+1}')$, $c(v_{i-1}')$, and also different from two colors in $N_H(v'_i)$. 
If $v'_i$ appears twice in $\{v'_1,\ldots,v'_k\}$, then we choose $c(v_i)$ to be different from $c(v_{i+2})$, $c(v_{i-2})$, $c(v_{i+1}')$, $c(v_{i-1}')$, and different form two colors in $N_H(v'_i)$. And if  $v'_i$ appears three or more  times in the multiset $\{v'_1,\ldots,v'_k\}$, then we choose  the color of $v_i$ to be different from  $c(v_{i+2})$, $c(v_{i-2})$, $c(v'_{i+1})$, $c(v'_{i-1})$ and when $i\in S_{v'_i} $ choose $c(v_i)$ to be also different from the color of two other vertices in $C$ (the two vertices other than $v_i$ whose indices belong to $S_{v'_i}$).

For each even $i$, the vertex $v'_i$ appears at most twice in the multiset $\{v'_1,\ldots,v'_k\}$, since otherwise a configuration smaller than $C$ exists in $G$. In fact when $k\neq 4$, the vertex $v'_i$ appears at most once in the multiset $\{v'_1,\ldots,v'_k\}$, by the same reason. If $v'_i$ appears only once in $\{v'_1,\ldots,v'_k\}$, then choose  $c(v_i)$ to be different from $c(v_{i+2})$, $c(v_{i-2})$, $c(v_{i+1}')$ ,$c(v_{i-1}')$, and also different from two colors in $N_H(v'_i)$. 
If $v'_i$ appears twice in $\{v'_1,\ldots,v'_k\}$, i.e.\ if $k=4$ and $v'_2=v'_4$, then we choose $c(v_i)$ to be different from $c(v_{i+2})$, $c(v_{i-2})$, $c(v_{i+1}')$, $c(v_{i-1}')$, and different from the color of the vertex in $N_H(v'_i)$.

Now we form a graph $D$ that represents the dependencies among the vertices of $S$.  The graph $D$ has vertex set $S$ and two vertices of $S$ are adjacent in $D$ if we require their colors to be different.  For each vertex $w$ in $S$, let $R(w)$ be the set of those colors we need to avoid for $c(w)$ that come from vertices outside $S$.   Define $L(w)=\{1,\ldots,6\}-R(w)$. By the above argument, each vertex of $S$ has a total of at most six restrictions. Moreover vertices of indices in $\{5,7,\ldots,k-1\}$ have four restrictions. Since $c^*(v'_1)=c^*(v'_3)$, the vertex $v_2$ has at most five restrictions, and finally when $k=4$, all the vertices of $S$ have at most five restrictions, because $v_{i+2}$ and $v_{i-2}$ are the same vertices in this case. 

 Hence  $|L(w)|$ is at least the degree of $w$ in $D$ for all $w\in S$, and $|L(w)|$ has size more than the degree of $w$ in $D$ when $w\in \{v_2,v_5,v_7,\ldots,v_{k-1}\}$. Therefore it is enough to show that $D$ is $L$-choosable, because in this case the proper coloring we obtain for $D$ would be an extension of $c^*$ to a 3-weak-dynamic coloring of $G$.
 
 Recall that $k$ is even. If $k=4$, then since the lists on all vertices have size larger than their degrees in $D$ the graph $D$ is $L$-choosable  by Corollary \ref{cor-list}. Thus suppose $k\geq 6$. Since $k$ is even and $k\geq 6$, the graph $D$ contains at most two components and for the case that it contains exactly two components, the vertices $v_5$ and $v_2$ belong to different components of $D$. Therefore all components of $D$ have vertices with lists larger than their degrees in $D$, which implies that $D$ is $L$-choosable by Corollary \ref{cor-list}.
\end{proof}





























\section{Proof of Theorem~\ref{main}}\label{proof}

\begin{proof}
Let $G$ be an edge-minimal planar graph with $wd_3(G)>6$. By Lemma \ref{4-4}, the $4^+$-vertices of $G$ form an independent set in $G$.  Let $A_4$ be the set of vertices of degree at least 4 in $G$. Let $A^*_3$ be the set of vertices $v$ of degree 3 in $G$ having neighbors $u_1,u_2,u_3$ that satisfy the following properties:
 
 \begin{itemize}
 \item $d(u_1)=d(u_2)=3$;
 \item each of $u_1$ and $u_2$ has two $4^+$-neighbors;
 \item all neighbors of $u_3$ have degree 3.
 \end{itemize}
 
 

 For each vertex $w$ of $G$, choose $N^*(w)$ to be $\rm{min}\{d(w),3\}$ vertices on $N(w)$  in such a way that $|N(w)\cap A_3^*|$ is as small as possible. In case we have several options to choose $N^*(w)$ under this condition, we choose a set whose induced subgraph in $G$ has the maximum number of edges. 
 
  Let $G'$ be an auxiliary graph of $G$ having the same vertex set as $G$. For each vertex $v$ in $G$, make the vertices in $N^*(v)$ pairwise adjacent in $G'$. Note that by the structure of $G'$, any proper coloring of $G'$ corresponds to a 3-weak-dynamic coloring of $G$. Thus it is enough to prove that $\chi(G')\leq 6$.

Successively remove vertices $v$ in $V(G)-(A_4\cup A_3^*)$ from $G$ and instead make all vertices in $N_G(v)\cap (A_4\cup A_3^*)$ pairwise adjacent.  Let $H$ be the resulting graph. Each of these operations  preserves planarity, because it corresponds to adding cords to two or three faces of a planar graph and then removing a vertex. Also note that none of the edges added via this type of operation intersect, because their corresponding cords in $G$ are non-intersecting.  Therefore $H$ is planar.

If $u$ and $v$ are $4^+$-vertices in $G$ having a common neighbor $w$, then by the structure of $A_3^*$ and by Lemma \ref{4-4} we have $w\in V(G)-(A_3^*\cup A_4)$. Similarly, if $u\in A_4$ and $v\in A_3^*$ have a common neighbor $w$ in $G$, then $w\in V(G)-(A_3^*\cup A_4)$. Hence $H$ contains all the edges of $G'$ having at least one endpoint in $A_4$.

Since $H$ is planar, by the Four Color Theorem there exists a proper coloring $c:V(H)\rightarrow \{1,2,3,4\}$.  For any vertex $v\in A_4$, define $c^*(v)=c(v)$. Since $G'[A_4]\subseteq H$, the coloring $c^*$ is a proper coloring of $G'[A_4]$. To finish the proof we aim to extend $c^*$  to a proper coloring of $G'$ using colors in $\{1,\ldots,6\}$.

  For each $v$ in $V(G')$, let $N_4(v)=N_{G'}(v)\cap A_4$.  For each vertex $v$ in $V(G')-A_4$, we define $L(v)=\{1,\ldots,6\}-c^*(N_4(v))$.  Note that all vertices in $V(G')-A_4$ have degree at most 3 in $G$, and that by the choice of $N^*$, each $3$-vertex of $G$ has degree at most 6 in $G'$.  We already have a proper coloring of $G'[A_4]$ using four colors $\{1,2,3,4\}$. We aim to extend this coloring to a proper coloring of $G'$. Hence let $G''=G'-A_4$.  Note that if $G''$ is $L$-choosable, then we obtain an extention of the proper coloring of $G'[A_4]$  to a proper coloring of $G'$ using colors $\{1,\ldots,6\}$. Therefore for the remaining of the proof our aim is to prove that $G''$ is $L$-choosable.  
  
 Since $d_{G'}(v)\leq 6$ for each vertex $v$ in $V(G')-A_4$, we have $|L(v)|\geq d_{G''}(v)$. 
 If any component of $G''$ has a vertex whose list size is greater than its degree, or if it has a block that is not a clique or odd cycle, then by Theorem \ref{list} and Corollary \ref{cor-list}  $G''$ is $L$-choosable, as desired.  Therefore let $C^*$ be a component of $G''$ whose vertices have list size equal to their degrees in $G''$ and whose blocks are complete graphs or odd cycles.

If $d_{G'}(v)\leq 5$, then $|L(v)|>d_{G''}(v)$. Hence $C^*$ does not contain such a vertex $v$. This simple observation implies that:

\begin{itemize}
\item $C^*$ contains no vertex $u$ whose degree is 2 in $G$;
\item $C^*$ contains no vertex $u$ such that $u$ has a 2-neighbor in $G$;
\item $C^*$ contains no vertex $u$ that is inside a 4-cycle  in $G$;
\item $C^*$ does not contain a vertex $u$ such that $u$ is a 3-vertex of $G$, it has a $4^+$-neighbor $u'$ in $G$, and $u\not\in N^*(u')$.
\end{itemize}

\noindent Also note that 

\begin{itemize}
\item $C^*$ contains no vertex $u$ of $A_3^*$,
\end{itemize}

\noindent because otherwise using the fact that $c$ is a proper coloring of $H$ using only 4 colors, we know that the four vertices in $N_{G'}(u)\cap A_4$  have at most three distinct colors under $c$. As a result, $|L(v)|\geq 3$ while $d_{G''}(v)\leq 2$.
 
 


Let $B$ be a pendant block of $C^*$.  By the choice of $C^*$ the block $B$ is a complete graph or an odd cycle. Note that since each vertex of $A_4$ has a color in $\{1,2,3,4\}$, each vertex of $G''$ gets a list of size at least $2$. Therefore no vertex in $B$ has degree $1$.  Hence $B$ contains at least three vertices.

We consider three cases.

\begin{description}
\item [Case 1:]  $B$ is an odd cycle.

Let the cycle $B$ be $u_1,u_2,\ldots,u_r$.  Therefore for each pair of vertices $u_i$ and $u_{i+1}$, there exists a vertex $v_i$ in $G$ such that $u_i$ and $u_{i+1}$ are neighbors of $v_i$ in $G$. Therefore $u_1v_1,v_1u_2,u_2v_2,v_2v_3,\ldots,u_{r}v_r,v_ru_1$ are all edges in $G$.

Let $r\geq 5$.
For each $i$, if $v_i$ has degree at least 4 in $G$, then by the construction of $G'$ and since all neighbors of $4^+$-vertices in $G$ are $3^-$-vertices, $u_i$ would be inside a triangle in $B$. Hence all vertices $v_1,\ldots,v_r$   have degree 3 in $G$.  If $r\geq 4$ and $v_i=v_{i+1}$ for some $i$, then $N^*(v_i)=\{u_i,u_{i+1},u_{i+2}\}$. As a result, the vertex $u_i$ has neighbors $u_{i-1},u_{i+1},u_{i+2}$ in $B$. This is a contradiction since $B$ is a cycle.
Otherwise, recall  that $u_1,\ldots,u_r$ are distinct vertices. Note that $u_1v_1u_2v_2\ldots u_{r}v_ru_1$ is a closed walk in $G$. Since $u_i$s are distinct and since $v_i\neq v_{i+1}$ for all $i$, no edge is repeated immediately in the closed walk. As a result of Proposition~\ref{pro-walk}, there exists a cycle in $G$ containing a subset of $\{u_1,\ldots,u_r\}\cup \{v_1,\ldots,v_r\}$. Hence we find a cycle $C$ in $G$ all whose vertices have degree 3.   This is a contradiction with Lemma \ref{lemma:3-RegularkCycle-remaining-cases}.

Now suppose $r=3$. If $v_1,v_2,$ and $v_3$ are distinct vertices, then similar to the above argument we obtain a contradiction by finding a cycle in $G$ all whose vertices have degree 3. Hence suppose $v_1=v_2$. Therefore $v_1$ is adjacent to $u_1,u_2,$ and $u_3$ in $G$. Recall that $B$ is a pendant block of $C^*$. Therefore at least two vertices of $B$ have degree 2 in $C^*$. As a result, at least two vertices in $\{u_1,u_2,u_3\}$ have four $4^+$-vertices on their second neighborhood. In fact, those two vertices  belong to $A_3^*$, because each of them has a neighbor ($v_1$) all of whose neighbors are $3^-$neighbors and has two other neighbors whose neighbors are $4^+$-vertices. This is   a contradiction because as we argued above $C^*$ contains no vertex of $A_3^*$.



\item [Case 2:]  At least one vertex in $V(B)$ is part of a 3-cycle in $G$. 

Let $wv_1v_2$ be a triangle in $G$ such that $\{w,v_1,v_2\}\cap V(B)\not=\emptyset$. By Lemma \ref{3-uniform triangle}, we may suppose that $d_G(w)\geq 4$ and $d_G(v_1)=d_G(v_2)=3$. Recall that vertices of $B$ are $3$-vertices in $G$.  Hence either $v_1$ and $v_2$ both belong to $V(B)$ or only one of them belongs to $V(B)$. Let $N_G(v_1)-\{w,v_2\}=\{v'_1\}$ and $N_G(v_2)-\{w,v_1\}=\{v'_2\}$.  We consider two subcases.

\textbf{Subcase 1.} $v_1\in V(B)$ and $v_2\in V(B)$. By Lemmas \ref{lemma:C_3withDegree3Neighbors} and \ref{lemma:C_3withDegree5top} we may suppose that $d_G(v'_2)\geq 4$. By the construction of $G''$, there exists a neighbor $v_3$ of $w$ such that $N^*(w)=\{v_1,v_2,v_3\}$.   Lemmas~\ref{lemma:C_3bottomadjacentC_3} and~\ref{lemma:edgeadjacentC_3} imply that $v'_1$, $v'_2$, and $v_3$ are distinct vertices.



 Since $d_{G}(v'_2)\geq 4$ by the construction of $G'$, the vertex $v'_2$ has two neighbors $v_4$ and $v_5$ in $G$ such that $N^*(v'_2)=\{v_2,v_4,v_5\}$. Note that since $G$ has no 4-cycle containing a vertex in $C^*$, the vertices $v_4$ and $v_5$ are distinct from $v_1$ and $v_3$.

The vertex $v_2$ is adjacent to $v_4$ and $v_5$ in $C^*$. If $v_2$ is not a cut-vertex of $B$ or if $v_4$ and $v_5$ belong to $B$, then  $B$ contains at least 5 vertices ($\{v_1,\ldots,v_5\}$). Hence $B$ cannot be a cycle, because $v_2$ is adjacent to $v_1,v_3,v_4,v_5$ in $B$. Therefore $B$ is a complete graph. Hence vertices $v_4$ and $v_5$ must be adjacent to $v_1$ in $B$. Equivalently, $v_4$ and $v_5$ must have common neighbors with $v_1$ in $G$. If $v_4w\in E(G)$ or $v_5w\in E(G)$, then $v_2$ belongs to a 4-cycle in $G$, which is not accepted. Hence we must have $v_4v'_1\in E(G)$ and $v_5v'_1\in E(G)$. This is a contradiction, because $v'_2v_4v'_1v_5v'_2$ forms a 4-cycle in $G$. 

Hence  $v_2$ must be a cut-vertex in $C^*$. If $v_4$ is a vertex of $B$, knowing that $v_4$ is not a cut-vertex of $B$, then we conclude that $v_5$ belongs to $B$. But we argued above that the case $v_4\in V(B)$ and $v_5\in V(B)$ cannot happen. Hence  none of the vertices $v_4$ and $v_5$  belongs to $B$.

We use a similar argument as above to show that $d_G(v'_1)=3$.  If $d_G(v'_1)\geq 4$,  then let $N^*(v'_1)=\{v_1,v_6,v_7\}$. Since $v_1$ is not a cut-vertex of $C^*$, the vertices $v_6$ and $v_7$ belong to $B$. Hence $B$ contains at least five vertices ($\{v_1,v_2,v_3,v_6,v_7\}$). Hence $B$ cannot be a cycle, because $v_1$ is adjacent to $v_2,v_3,v_6,v_7$ in $B$. Therefore $B$ is a complete graph. Hence vertices $v_6$ and $v_7$ must be adjacent to $v_2$ in $B$. Equivalently, $v_6$ and $v_7$ must have common neighbors with $v_2$ in $G$. If $v_6w\in E(G)$ or $v_7w\in E(G)$, then $v_1$ belongs to a 4-cycle in $G$, which is not accepted. Hence we must have $v_6v'_2\in E(G)$ and $v_7v'_2\in E(G)$. This is a contradiction, because $v'_1v_6v'_2v_7v'_1$ forms a 4-cycle in $G$.  Hence we have $d_G(v'_1)\leq 3$, and so by Lemma~\ref{lemma:AdjacentDegree2s}, we have $d_G(v'_1)=3$.

 Since $C^*$ has no vertex in $A_3^*$, the vertex $v'_1$ does not have two $4^+$-neighbors in $G$, otherwise $v_1\in A_3^*$.  Hence $v'_1$ must have at least one other 3-neighbor $v_6$ beside $v_1$. The vertex $v_6$ is adjacent to $v_1$ in $B$, and as a result it must also be adjacent to $v_2$ in $B$. Therefore $v_6$ must have a common neighbor with $v_2$ in $G$ that belongs to $N^*(v_2)$. That common neighbor is not $w$, because otherwise we find a 4-cycle containing $v_1$ in $G$. Hence $v_6$ must    belong to $N^*(v'_2)$. In other words $v_6=v_4$ or $v_6=v_5$. But this is a contradiction, because $v_6$ is a vertex of $B$ while $v_4$ and $v_5$ are not vertices of $B$.


\textbf{Subcase 2.} $v_1\in V(B)$ but $v_2\not\in V(B)$. By the construction of $G'$, there exist  neighbors $v_3$ and $v_4$ of $w$ such that $N^*(w)=\{v_1,v_3,v_4\}$.    If $v_3v_4\in E(G)$, then we can repeat Subcase 1 for the triangle $wv_3v_4$. Hence suppose $v_3v_4\not\in E(G)$. Therefore by the choice of $N^*(w)$, we have $v_2\in A_3^*$, $v_3\not\in A_3^*$, and $v_4\not\in A_3^*$, since otherwise $\{v_1,v_2,v_3\}$ or $\{v_1,v_2,v_4\}$ would give us a better option for $N^*(w)$, according to the choice of $N^*(w)$.

Since $v_2\in A_3^*$, the vertex $v'_2$ has degree 3 in $G$ and has two $4^+$-neighbors in $G$. By the same reason $d_G(v'_1)\geq 4$.  Let $N^*(v'_1)=\{v_1,v_5,v_6\}$. Note that we know $v_1\in N^*(v'_1)$, since otherwise the vertex $v_1$ has a list of size larger than its degree in $G''$. We have $\{v_5,v_6\}\cap \{v_2,v_3,v_4\}=\emptyset$, since otherwise $G$ contains a 4-cycle containing $v_1$, which is not accepted. Therefore according to the adjacencies we have determined so far in $G$, the vertex $v_1$ has neighbors $\{v'_2,v_3,\ldots,v_6\}$ in $C^*$. Therefore $d_{C^*}(v_1)=5$.

Let $v_7$ and $v_8$ be the $4^+$-neighbors of $v'_2$.  Since vertex $v'_2$ has two $4^+$-neighbors and since $v'_2$ belongs to $C^*$ (because it is adjacent to $v_1$ in $C^*$), we must have $v'_2\in N^*(v_7)$ and $v'_2\in N^*(v_8)$, since otherwise the list of $v'_2$ in $G''$ has size larger than its degree in $G''$, which is not accepted. Therefore $d_{C^*}(v'_2)=5$.

Let $N_G(v_3)=\{w,v'_3,v''_3\}$ and $N_G(v_4)=\{w,v'_4,v''_4\}$. If the neighbors of $v_3$ in $C^*$ are only $v_1$ and $v_4$, then $v_3$ has to be a vertex in $A_3^*$, which is not accepted. If $v_3$ has at most one more neighbor besides $v_1$ and $v_4$ in $C^*$, then we must have $d_G(v'_3)=d_G(v''_3)=3$, one vertex in $\{v'_3,v''_3\}$ has exactly one $3$-neighbor $x$, and one vertex in $\{v'_3,v''_3\}$ has two $4^+$-neighbors. When $x\neq w$ we get a contradiction with Lemma \ref{Fig-4-3-3-4} and when $x=w$ we get a contradiction with Lemmas \ref{lemma:C_3withDegree3Neighbors} and \ref{lemma:C_3withDegree5top}. Therefore $d_{C^*}(v_3)\geq 4$.  By a similar argument, we have $d_{C^*}(v_4)\geq 4$, $d_{C^*}(v_5)\geq 4$, and $d_{C^*}(v_6)\geq 4$.


By the above arguments, the vertices $v_1,v'_2,v_3,v_4,v_5,v_5$ belong to $C^*$ and all of them have degree at least 4 in $C^*$. We know moreover that $N_{C^*}(v_1)=\{v'_2,v_3,v_4,v_5,v_6\}$ and the vertex $v_1$ is a vertex of the block $B$. Hence $B$ has 5 or 6 vertices. Since $v_1,v_3,v_4$ and $v_1,v_5,v_6$ form triangles in $C^*$, we conclude that either $V(B)=\{v_1,v'_2,v_3,v_4,v_5,v_6\}$ or  $V(B)=\{v_1,v_3,v_4,v_5,v_6\}$. In the both cases $B$ cannot be an odd cycle, so it  is a complete graph. 

Hence $v_3$ and $v_5$ have a common neighbor $z$ in $G$. Also $v_3$ and $v_6$ have a common neighbor $z'$ in $G$. We have $z\neq z'$ and $\{z,z'\}\cap \{w,v_1,\ldots,v_6,v'_1,v'_2\}$, since otherwise a 4-cycle containing a vertex of $B$ exists in $G$ or Subcase 1 can be applied. Similarly there are disjoint vertices $y$ and $y'$ in $G$ such that $y$ is a common neighbor of $v_4$ and $v_5$ in $G$, $y'$ is a common neighbor of $v_4$ and $v_6$ in $G$, and  $\{y,y'\}\cap \{w,v_1,\ldots,v_6,v'_1,v'_2\}$. We also have $\{z,z'\}\cap \{y,y'\}=\emptyset$, since otherwise $v_3$ or $v_5$ is inside a 4-cycle in $G$. 

Now the vertices $w,v_5,v_6$ and $v_1,v_3,v_4$ are the branch vertices of a $K_{3,3}$-minor in $G$, which implies $G$ is not planar, a contradiction.

\item [Case 3:]  $B$ is a complete graph. 

By Case 1 we may suppose $B$ is a complete graph with four, five,  six, or seven vertices, as each vertex in $G''$ has degree at most 6. Since $B$ is a pendant block, in $G''$ all but at most one vertex of $B$ has all its neighbors in $V(B)$.  Let $v$ be one of the vertices of $B$ all whose neighbors in $G''$ are in $V(B)$, i.e. $v$ is not a cut-vertex of $C^*$. Let $u_1,u_2,u_3$ be the neighbors of $v$ in $G$. By Case 2, $\{u_1,u_2,u_3\}$ forms an independent set in $G$. 

We consider three subcases. 

\textbf{Subcase 1.} Two of the neighbors of $v$ in $B$, say $w_1$ and $w_2$, are neighbors of $u_1$ in $G$, and two of the neighbors of $v$ in $B$, say $w_3$ and $w_4$, are neighbors of $u_2$ in $G$.

By Case 2, we may suppose that $\{w_1,w_2,w_3,w_4\}\cap \{u_1,u_2,u_3\}=\emptyset$. Since $G$ is planar, we may suppose that the vertices $w_1,\ldots,w_4$ appear in the counterclockwise direction in the drawing of $G$.  Note that  $w_1,\ldots,w_4$ have degree 3 in $G$. Since $B$ is a complete graph, the four vertices $w_1,\ldots,w_4$ are pairwise adjacent in $B$, and hence each pair of them must have a  common neighbor in $G$.

 Let $y_1$ be the common neighbor of $w_1$ and $w_3$ in $G$. We have $y_1\neq w_4$, since otherwise $w_3w_4\in E(G)$ and Case 2 can be applied on the triangle $u_2w_3w_4$. Similarly $y_1\neq w_2$. Hence all the vertices $v,u_1,u_2,w_1,w_2,w_3,w_4,y_1$ are distinct. Now consider the cycle $C':\ vu_1w_1y_1w_3u_2v$. Since the vertices $w_1,\ldots,w_4$ are in counterclockwise direction,  the cycle $C'$ separates the  vertex $w_2$ from the vertex $w_4$  in $G$.  In order to have a common neighbor for $w_2$ and $w_4$ in $G$,  both of $w_2$ and $w_4$  have to be adjacent to a vertex $x$ in the cycle $C'$. We have $x\neq v$, because the only neighbors of $v$ in $G$ are $u_1,u_2,u_3$. We have $x\neq u_1$, $x\neq u_2$, and $x\neq y_1$,  since otherwise $G$ contains a 4-cycle containing $w_2$ or $w_4$, which is not accepted. We have $x\neq w_1$ and $x\neq w_3$, because otherwise Case 2 can be applied. Therefore this subcase does not happen.

\textbf{Subcase 2.} Two of the neighbors of $v$ in $B$, say $w_1$ and $w_2$, are neighbors of $u_1$ in $G$, and one of the neighbors of $v$ in $B$, say $w_3$, is a neighbor of $u_2$ in $G$.

Since $G$ is planar, we may suppose that the vertices $w_1,w_2,w_3$ appear in the counterclockwise direction in  $G$. Note that when $d_B(v)=6$ or $d_B(v)=5$, Subcase 1 can be applied to get a contradiction. Hence we may suppose that $d_B(v)\leq 4$. By Subcase 1, we may also suppose that $u_2$ has a neighbor of degree at least $4$. As a result, $d_G(u_2)=3$.  By a similar argument we have $d_G(u_3)=3$. Let $z$ be the $4^+$-neighbor of $u_2$. 

If $d_G(u_1)\geq 4$, then $u_1,v,u_2,z,u_3,w_3$ form a configuration as of Lemma \ref{lemma:4-3-3-4Configuration}, which is a contradiction. Therefore we have $d_G(u_1)=3$. The vertices $w_1$ and $w_3$ must have a common neighbor $y_1$ in $G$. By Case 2, the vertex $y_1$ is different from vertices $w_2$ and $z$. Therefore the vertices $v,u_1,u_2,w_1,w_2,w_3,z,y_1$ are all distinct vertices in $G$. If $d_G(y_1)\leq 3$, then $y_1w_1u_1vu_2w_3y_1$ forms a cycle of all $3^-$-vertices, which contradicts Lemma \ref{lemma:3-RegularkCycle-remaining-cases}. Hence $d_G(y_1)\geq 4$. 

By the construction of $G''$, the vertex $y_1$ has a neighbor $w_4$ in $G$ such that $w_4$ is adjacent to $w_1$ and $w_3$ in $B$, i.e. $N^*(y_1)=\{w_1,w_3,w_4\}$. Note that $w_4\neq w_2$, since otherwise a 4-cycle containing $w_2$ exists in $G$. On the other hand since $B$ is a complete graph, $w_4$ must be in the second neighborhood of $v$. Therefore $w_4$ must be adjacent to $u_3$.  

If $w_3$ has only one $4^+$-neighbor in $G$ (the vertex $y_1$), then  $y_1,w_3,u_2,z$ form a configuration as the one in Lemma \ref{Fig-4-3-3-4}, which is a contradiction. Similarly, if $w_4$ has only one $4^+$-neighbor in $G$ (the vertex $y_1$), then the vertices  $y_1,u_4,u_3$, and the $4^+$-neighbor of $u_3$  form a configuration as the one in Lemma \ref{Fig-4-3-3-4}, which is a not accepted. Therefore both of $w_3$ and $w_4$ have two $4^+$-neighbors in $G$. As a result, each of them has degree $5$ in $C^*$. We can repeat Subcase 1 for a vertex in $\{w_3,w_4\}$ that is not a cut-vertex of $C^*$.


\textbf{Subcase 3.} Exactly one neighbor of $v$ in $B$, say $w_1$ is a neighbor of $u_1$ in $G$,  exactly one neighbor of $v$ in $B$, say $w_2$ is a neighbor of $u_2$ in $G$, and exactly one neighbor of $v$ in $B$, say $w_3$ is a neighbor of $u_3$ in $G$.

Therefore, by Subcases 1 and 2, we may suppose that each of $u_1,u_2$, and $u_3$ has a $4^+$-neighbor in $G$. Suppose $z_1$ is the $4^+$-neighbor of $u_1$ in $G$, $z_2$ is the $4^+$-neighbor of $u_2$ in $G$, and $z_3$ is the $4^+$-neighbor of $u_3$ in $G$. Hence $d_G(u_1)=d_G(u_2)=d_G(u_3)=3$. Note that in this case $B$ is a complete graph with vertices $w_1,w_2,w_3,$ and $v$. Hence $w_1$ and $w_2$ must have a common neighbor, say $y_1$, in $G$.

If $d_G(y_1)=3$, then $vu_1w_1y_1w_2u_2v$ is a cycle in $G$ all whose vertices have degree 3, a contradiction with Lemma \ref{lemma:3-RegularkCycle-remaining-cases}.  Hence we must have $d_G(y_1)\geq 4$. Since $|N^*(y_1)|=3$,  all vertices in $N^*(y_1)$ have degree at most 3, and since $B$ has only four vertices, the vertex $y_1$ must be adjacent to $w_3$ in $G$. Recall that at most one vertex in $\{w_1,w_2,w_3\}$ is a cut-vertex of $C^*$. With no loss of generality suppose $w_1$ is not a cut-vertex of $C^*$. Now subcase 2 can be applied on $w_1$ to get a contradiction.
\end{description}
\end{proof}

\section{Future Work}

At the moment, we know of no planar graph with 3-weak-dynamic number 6. 
However, there are planar graphs with 3-weak-dynamic number 5, as we can see in Figure \ref{Fig-3-weak-dynamic-5}. Therefore  the best general upper bound for 3-weak-dynamic number of planar graphs  is either 5 or 6. 

\begin{question}
Are there planar graphs that have 3-weak-dynamic number 6?
\end{question}

\begin{figure} [H]
\begin{subfigure}{0.5\textwidth}
\begin{center}
\begin{tikzpicture}
\draw (0,0) -- (1.5,0) -- (1.5,2) -- (0,2) -- (0,0);
\draw (0,0) to[bend left] (0,2);
\draw (1.5,0) to[bend right] (1.5,2);
\filldraw (0,2) circle (3pt) -- (1.5,1) circle (3pt);
\filldraw (0,1) circle (3pt) -- (1.5,0) circle (3pt);
\filldraw (0,0) circle (3pt);
\filldraw (1.5,2) circle (3pt);
\end{tikzpicture}
\end{center}
\caption{$wd_3(G)=5$}
\end{subfigure}
\begin{subfigure}{0.5\textwidth}
\begin{center}
\begin{tikzpicture}
\foreach \x in {1,...,7}
{
    \filldraw (360/7*\x-90/7:1.25) circle (3pt);
    \draw (360/7*\x-90/7:1.25) -- (360/7*\x+270/7:1.25);
}
\foreach \x in {3,5}
    \draw (360/7*\x-90/7:1.25) -- (360/7*\x+90:1.25);
\draw (2070/7:1.25) .. controls (2310/7:1.875) and (30/7:1.875) .. (270/7:1.25);
\end{tikzpicture}
\end{center}
\caption{$wd_3(G)=5$}
\end{subfigure}
\caption{Graphs with 3-weak-dynamic number 5.}\label{Fig-3-weak-dynamic-5}
\end{figure}
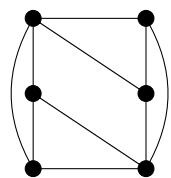
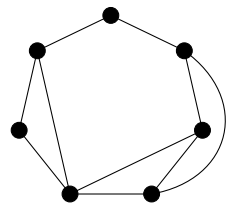


{\small

}

\end{document}